\newtheorem{Theorem}{Theorem}[section]
\newtheorem{Lemma}[Theorem]{Lemma}
\newtheorem{Prop}[Theorem]{Proposition}
\newtheorem{Rem}[Theorem]{Remark}
\def\cF{\mathcal{F}}
   \def\fm{\mathfrak{m}}
\def\Erw{\mathbb{E}}
\def\N{\mathbb{N}}
\def\Prob{\mathbb{P}}
\def\R{\mathbb{R}}
\def\eps{\varepsilon}
\def\1{\vec{1}}
\def\3{{\ss}}
\def\RA{\Rightarrow}
\def\wh{\widehat}
\def\ovl{\overline}
\def\sg{\sigma^{>}}
\def\sgn{\sigma_{n}^{>}}
\def\sl{\sigma^{<}}
\def\sln{\sigma_{n}^{<}}
\def\supp{{\rm supp}}
\def\ov{\overline}
\def\8{\infty}
\def\wt{\widetilde}
\begin{document}

\title*{Null recurrence and transience of random difference equations in the contractive case}
\titlerunning{Null recurrence and transience of RDE in the contractive case}
\author{Gerold Alsmeyer, Dariusz Buraczewski and Alexander Iksanov}
\institute{Gerold Alsmeyer \at Institute of Mathematical Stochastics, Department
of Mathematics and Computer Science, University of M\"unster,
Einsteinstrasse 62, D-48149 M\"unster, Germany.\at
\email{gerolda@math.uni-muenster.de}\\
Dariusz Buraczewski \at Institute of Mathematics, University of Wroclaw,
pl. Grunwaldzki 2/4, 50-384 Wroclaw, Poland.\at
\email{dbura@math.uni.wroc.pl}\\
Alexander Iksanov \at Faculty of Computer Science and Cybernetics, Taras Shevchenko National University of Kyiv, 01601 Kyiv, Ukraine and Institute of Mathematics, University of Wroclaw, pl. Grunwaldzki 2/4, 50-384 Wroclaw, Poland. \at
\email{iksan@univ.kiev.ua}}

\maketitle

\abstract{Given a sequence $(M_{k}, Q_{k})_{k\ge 1}$ of independent, identically distributed ran\-dom vectors with nonnegative components, we consider the recursive Markov chain $(X_{n})_{n\ge 0}$, defined by the random difference equation $X_{n}=M_{n}X_{n-1}+Q_{n}$ for $n\ge 1$, where $X_{0}$ is independent of $(M_{k}, Q_{k})_{k\ge 1}$. Criteria for the null recurrence/transience are provided in the situation where $(X_{n})_{n\ge 0}$ is contractive in the sense that $M_{1}\cdot\ldots\cdot M_{n}\to 0$ a.s., yet occasional large values of the $Q_{n}$ overcompensate the contractive behavior so that positive recurrence fails to hold. We also investigate the attractor set of $(X_{n})_{n\ge 0}$ under the sole assumption that this chain is locally contractive and recurrent.}

\bigskip

{\noindent \textbf{AMS 2000 subject classifications:}
60J10; 60F15\ }

{\noindent \textbf{Keywords:} attractor set, null recurrence, perpetuity, random difference equation, transience}

\section{Introduction}\label{sec:intro}

Let $(M_{n}, Q_{n})_{n\ge 1}$ be a sequence of independent, identically distributed (iid) $\R_+^{2}$-valued random vectors with common law $\mu$ and generic copy $(M, Q)$, where $\R_+:=[0,\infty)$. Further, let $X_{0}$ be a nonnegative random variable which is independent of $(M_{n},Q_{n})_{n\ge 1}$. Then the sequence $(X_{n})_{n\ge 0}$, recursively defined by the random difference equation (RDE)
\begin{equation}\label{chain}
X_{n}\ :=\ M_{n}X_{n-1}+Q_{n},\quad n\ge 1,
\end{equation}
forms a temporally homogeneous Markov chain with transition kernel $P$ given by
$$ Pf(x)\ =\ \int f(mx+q)\ {\rm d}\mu(m,q) $$
for bounded measurable functions $f:\R\to\R$. The operator $P$ is Feller because it maps bounded continuous $f$ to functions of the same type. To underline the role of the starting point we occasionally write $X_{n}^{x}$ when $X_{0} = x$ a.s. Since $M$, $Q$ and $X_{0}$ are nonnegative, $(X_{n})_{n\ge 0}$ has state space $\R_{+}$.

\vspace{.1cm}
The sequence $(X_{n})_{n\ge 0}$ may also be viewed as a \emph{forward iterated function system}, viz.
$$ X_{n}\ =\ \Psi_{n}(X_{n-1})\ =\ \Psi_{n}\circ\ldots\circ\Psi_{1}(X_{0}),\quad n\ge 1, $$
where $\Psi_{n}(t):=Q_{n}+M_{n} t$ for $n\ge 1$ and $\circ$ denotes composition, and thus opposed to its closely related counterpart
of \emph{backward iterations}
$$\wh{X}_{0}\ :=\ X_{0}\quad\text{and}\quad \wh{X}_{n}\ :=\ \Psi_{1}\circ\ldots\circ\Psi_{n}(X_{0}),\quad n\ge 1. $$
The relation is established by the obvious fact that $X_{n}$ has the same law as $\wh{X}_{n}$ for each $n$, regardless of the law of $X_{0}$.

\vspace{.1cm}
Put
$$\Pi_{0}\ :=\ 1\quad\text{and}\quad\Pi_{n}\ :=\ M_{1}M_{2}\cdot\ldots\cdot M_{n},\quad n\ge 1.$$
Assuming that
\begin{equation}\label{trivial}
\Prob(M=0)\ =\ 0\quad \text{and}\quad\Prob (Q=0)\ <\ 1
\end{equation}
and
\begin{equation}\label{trivial2}
\Prob(Mr+Q=r)\ <\ 1\quad\text{for all }r\ge 0,
\end{equation}
Goldie and Maller \cite[Theorem 2.1]{GolMal:00} showed (actually, these authors did not assume that $M$ and $Q$ are nonnegative) that the series
$\sum_{k\ge 1}\Pi_{k-1}Q_{k}$, called \emph{perpetuity}, is a.s.\ convergent provided that
\begin{equation}\label{33}
\lim_{n\to\infty}\Pi_{n}\ =\ 0\quad\text{a.s.\quad and}\quad I_{Q}\ :=\ \int_{(1,\,\infty)}J_{-}(x)\ \Prob(\log Q\in {\rm d}x)\ <\ \infty,
\end{equation}
where
\begin{equation}\label{jx}
J_{-}(y):=\frac{y}{\Erw(y\wedge\log_-M)},\quad y>0
\end{equation}
and $\log_- x=-\min(\log x, 0)$. Equivalently, the Markov chain $(X_{n})_{n\ge 0}$ is then positive recurrent with unique invariant distribution given by the law of the perpetuity. It is also well-known what happens in the ``trivial cases'' when at least one of the conditions \eqref{trivial} and \eqref{trivial2} fails \cite[Theorem 3.1]{GolMal:00}:
\begin{description}[(b)]
\item[(a)] If $\Prob(M=0)>0$, then $\tau:=\inf\{k\ge 1:M_{k}=0\}$ is a.s. finite, and the perpetuity trivially converges to the a.s.\ finite random variable $\sum_{k=1}^{\tau}\Pi_{k-1}Q_{k}$, its law being the unique invariant distribution of $(X_{n})_{n\ge 0}$.\vspace{.1cm}
\item[(b)] If $\Prob(Q=0)=1$, then $\sum_{k\ge 1}\Pi_{k-1}Q_{k}=0$ a.s.\vspace{.1cm}
\item[(c)] If $\Prob(Q+Mr=r)=1$ for some $r\ge 0$ and $\Prob(M=0)=0$, then either $\delta_{r}$, the Dirac measure at $r$, is the unique invariant distribution of $(X_{n})_{n\ge 1}$, or every distribution is invariant.
\end{description}
Further information on RDE and perpetuities can be found in the recent books \cite{BurDamMik:16} and \cite{Iksanov:17}.

\vspace{.1cm}
If \eqref{trivial}, \eqref{trivial2},
\begin{equation}\label{30}
\lim_{n\to\infty}\Pi_{n}=0\quad\text{a.s.\quad and}\quad I_{Q}\,=\,\infty
\end{equation}
hold, which are assumptions in most of our results hereafter (with the exception of Section \ref{attr}) and particularly satisfied if
\begin{equation}\label{eq:log}
-\infty\ \le\ \Erw\log M\ <\ 0\quad\text{and}\quad\Erw\log_{+} Q\ =\ \infty,
\end{equation}
where $\log_+ x=\max(\log x, 0)$, then the afore-stated result \cite[Theorem 2.1]{GolMal:00} by Goldie and Maller implies that $(X_{n})_{n\ge 0}$ must be either null recurrent or transient. Our purpose is to provide conditions for each of these alternatives and also to investigate the path behavior of $(X_{n})_{n\ge 0}$. We refer to \eqref{30} as the \emph{divergent contractive case} because, on the one hand, $\Pi_{n}\to 0$ a.s. still renders $\Psi_{n}\circ\ldots\circ\Psi_{1}$ to be contractions for sufficiently large $n$, while, on the other hand, $I_Q=\infty$ entails that occasional large values of the $Q_{n}$ overcompensate this contractive behavior in such a way that positive recurrence does no longer hold. As a consequence, $\sum_{k\ge 1}\Pi_{k-1}Q_{k}=\infty$ a.s. and so the backward iterations $\wh{X}_{n}=\Pi_{n}X_{0}+\sum_{k=1}^{n}\Pi_{k-1}Q_{k}$ diverge to $\infty$ a.s. regardless of whether the chain $(X_{n})_{n\ge 0}$ is null recurrent or transient. The question of which alternative occurs relies on a delicate interplay between the $\Pi_{n}$ and the $Q_{n}$. Our main results (Theorems \ref{main11} and \ref{main12}), for simplicity here confined to the situation when \eqref{trivial}, \eqref{trivial2}, \eqref{eq:log} hold and $s:=\lim_{t\to\infty}t\,\Prob(\log Q>t)$ exists, assert that $(X_{n})_{n\ge 0}$ is null recurrent if $s<-\Erw\log M$ and transient if $s>-\Erw\log M$. For deterministic $M\in (0,1)$, i.e., autoregressive sequences $(X_{n})_{n\ge 0}$, this result goes already back to Kellerer \cite[Theorem 3.1]{Kellerer:92} and was later also proved by Zeevi and Glynn \cite[Theorem 1]{ZeeviGlynn:04}, though under a further extra assumption, namely that $Q$ has log-Cauchy tails with scale parameter $s$, i.e.
$$ \Prob(\log(1+Q)>t)\ =\ \frac{1}{1+st}\quad\text{for all }t>0. $$
On the other hand, they could show null recurrence of $(X_{n})_{n\ge 0}$ even in the boundary case $s=-\log M$. Kellerer's result will be of some relevance here because we will take advantage of it in combination with a stochastic comparison technique (see Section \ref{M<=gamma<1}, in particular Proposition \ref{Kellerer's result}). Finally, we mention work by Bauernschubert \cite{Bauernschubert:13}, Buraczewski and Iksanov \cite{Buraczewski+Iksanov:15}, Pakes \cite{Pakes:83} and, most recently, by Zerner \cite{Zerner:2016+} on the divergent contractive case, yet only the last one studies the recurrence problem and is in fact close to our work. We will therefore comment on the connections in more detail in Remark \ref{zer}.

\vspace{.1cm}
In the critical case $\Erw\log M=0$ not studied here, when $\limsup_{n\to\infty}\Pi_{n}=\infty$ a.s. and thus non-contraction holds, a sufficient criterion for the null recurrence of $(X_{n})_{n\ge 0}$ and the existence of an essentially unique invariant Radon measure $\nu$ was given by Babillot et al. \cite{BabBouElie:97}, namely
$$ \Erw|\log M|^{2+\delta}\,<\,\infty\quad\text{and}\quad\Erw(\log_{+}Q)^{2+\delta}\,<\,\infty\quad\text{for some }\delta>0. $$
For other aspects like the tail behavior of $\nu$ or the convergence $\wh{X}_{n}$ after suitable normalization see \cite{Brofferio:03, Buraczewski:07, Grincev:76, HitczenkoWes:11,Iksanov+Pilipenko+Samoilenko:17,RachSamo:95}.

\vspace{.1cm}
The paper is organized as follows. In Section \ref{sec:background}, we review known results about general locally contractive Markov chains which form the theoretical basis of the present work. Our main results are stated in Section \ref{results} and proved in Sections \ref{M<=gamma<1}, \ref{sec:tail lemma} and \ref{sec:main11 and main12}. In Section \ref{attr} we investigate the attractor set of the Markov chain $(X_{n})_{n\ge 0}$ under the sole assumption that $(X_{n})_{n\ge 0}$ is locally contractive and recurrent.

\section{Theoretical background}\label{sec:background}

We start by giving some useful necessary and sufficient conditions for the transience and recurrence of the sequence $(X_{n})_{n\ge 0}$. The following definition plays a fundamental role in the critical case $\Erw\log M=0$, see \cite{BabBouElie:97,Benda:98b,Brofferio:03,BroBura:13,Buraczewski:07,PeigneWoess:11a}. A general Markov chain $(X_{n})_{n\ge 0}$, possibly taking values of both signs, is called \emph{locally contractive} if, for any compact set $K$ and all $x,y\in\R$,
\begin{equation}\label{eq: local contraction}
\lim_{n\to\infty}\big| X_{n}^{x}-X_{n}^{y}\big| \cdot\1_{\{X_{n}^{x}\in K \}}\ =\ 0\quad\text{a.s.}
\end{equation}
For the chain $(X_{n})_{n\ge 0}$ to be studied here, we observe that, under \eqref{30},
$$ \big| X_{n}^{x}-X_{n}^{y} \big|\ =\ \Pi_{n}|x-y|\ \underset{n\to\infty}{\longrightarrow} \ 0\quad\text{ a.s.} $$
for all $x,y\in\R$. This means that $(X_{n})_{n\ge 0}$ is contractive and hence locally contractive. Yet, it may hold that
$$ \Prob\left(\lim_{n\to\infty}|X_{n}^{x}-x|=\infty\right)\ =\ 1$$
for any $x\in\R$ in which case the chain is called \emph{transient}. We quote the following result from \cite[Lemma 2.2]{PeigneWoess:11a}.

\begin{Lemma}\label{lem:1}
If $(X_{n})_{n\ge 0}$ is locally contractive, then the following dichotomy holds: either
\begin{equation}\label{eq:3}
\Prob\left(\lim_{n\to\infty}|X_{n}^{x}-x|=\infty\right)\ =\ 0\quad\text{for all }x\in\R
\end{equation}
or
\begin{equation}\label{eq:4}
\Prob\left(\lim_{n\to\infty}|X_{n}^{x}-x|=\infty\right)\ =\ 1\quad\text{for all }x\in\R.
\end{equation}
\end{Lemma}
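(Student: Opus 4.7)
The plan is to prove the dichotomy in two stages. Abbreviate $A_x:=\{\lim_{n\to\infty}|X_n^x-x|=\infty\}$ and $p(x):=\Prob(A_x)$; since $x$ is a deterministic constant, $A_x$ coincides with $\{|X_n^x|\to\infty\}$. The first stage shows that $p$ is constant on $\R$ by exploiting local contractivity under a natural coupling of the chains started from different points. The second stage shows that the common value $c:=p(x)$ must lie in $\{0,1\}$ by combining the tail-measurability of $A_x$ with the Markov property through L\'evy's upward martingale theorem.

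For the first stage, couple all chains $(X_n^x)_{n\ge 0}$ on one probability space by driving them with the same underlying noise. Fix arbitrary $x,y\in\R$ and apply local contractivity to the countably many compact sets $K_m:=[-m,m]$, $m\in\N$; outside a single $\Prob$-null set $N$ one has $|X_n^x-X_n^y|\cdot\mathbf{1}_{\{X_n^x\in K_m\}}\to 0$ for every $m$ simultaneously. On $A_x^c$ one has $\liminf_n|X_n^x|<\infty$, so $X_n^x\in K_m$ for infinitely many $n$ and some integer $m$; along this subsequence $|X_n^x-X_n^y|\to 0$, which forces $\liminf_n|X_n^y|<\infty$, i.e.\ $\omega\in A_y^c$. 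Hence $A_x^c\subset A_y^c$ a.s., and by symmetry $A_x=A_y$ a.s., so $p(x)=p(y)$.

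For the second stage, let $c:=p(x)$ denote the common value. The event $A_x$ is insensitive to the first $m$ coordinates of $(X_n^x)_{n\ge 0}$ and hence tail-measurable, so L\'evy's upward theorem yields $\Erw[\mathbf{1}_{A_x}\mid\mathcal{F}_m]\to\mathbf{1}_{A_x}$ almost surely, where $\mathcal{F}_m:=\sigma(X_0^x,\dots,X_m^x)$. By the Markov property, $\Erw[\mathbf{1}_{A_x}\mid\mathcal{F}_m]=p(X_m^x)=c$ for every $m$, because conditionally on $\mathcal{F}_m$ the post-$m$ trajectory is distributed as a copy of the chain started from $X_m^x$. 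Letting $m\to\infty$ gives $\mathbf{1}_{A_x}=c$ a.s., so $c\in\{0,1\}$, which is the claimed dichotomy.

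The only delicate point is the first stage: local contractivity provides a null exceptional set depending on each prescribed compact $K$, so one must witness $A_x^c$ using only the countable family $\{K_m\}$ rather than a random compact set that would invalidate the almost-sure statement. Everything else is routine Markov-chain machinery, and no tail estimate for the $\Pi_n$ or $Q_n$ is needed at this level of generality.
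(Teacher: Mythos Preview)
The paper does not give its own proof of this lemma; it merely quotes the result from Peign\'e and Woess \cite[Lemma~2.2]{PeigneWoess:11a}. Your argument is correct and self-contained. Stage~1 is fine: taking the countable family $K_m=[-m,m]$ is exactly the right way to handle the $K$-dependent null sets in the definition of local contractivity, and the inclusion $A_x^c\subset A_y^c$ a.s.\ follows as you describe. Stage~2 is also correct: since $p$ has already been shown constant, the identity $\Erw[\mathbf{1}_{A_x}\mid\cF_m]=p(X_m^x)=c$ is immediate from the Markov property, and L\'evy's upward theorem then forces $c=\mathbf{1}_{A_x}$ a.s. One small expository point: the phrase ``hence tail-measurable, so L\'evy's upward theorem yields\ldots'' slightly conflates two separate roles. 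L\'evy's theorem needs only $A_x\in\cF_\infty$, which is trivial here; tail-measurability (i.e.\ $A_x\in\sigma(X_n^x:n\ge m)$ for every $m$) is what you actually use to justify $\Erw[\mathbf{1}_{A_x}\mid\cF_m]=p(X_m^x)$ via the Markov property. The logic is sound, only the connective is placed imprecisely.
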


The lemma states that either $(X_{n})_{n\ge 0}$ is transient or visits a large interval infinitely often (i.o.). The Markov chain $(X_{n})_{n\ge 0}$ is called \emph{recurrent} if there exists a nonempty closed set $L\subset\R$ such that $\Prob(X_{n}^{x}\in U\text{ i.o.})=1$ for every $x\in L$
and every open set $U$ that intersects $L$. Plainly, recurrence is a local property of the path of $(X_{n})_{n\ge 0}$.

\vspace{.1cm}
The next lemma can be found in \cite[Theorem 3.8]{Benda:98b} and \cite[Theorem 2.13]{PeigneWoess:11a}.

\begin{Lemma}\label{lem:3}
If $(X_{n})_{n\ge 0}$ is locally contractive and recurrent, then there exists a unique (up to a multiplicative constant) invariant locally finite measure $\nu$.
\end{Lemma}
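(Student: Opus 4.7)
My plan is to split the argument into the standard two parts, existence and uniqueness, both of which hinge on combining the topological recurrence on $L$ with the local contraction property \eqref{eq: local contraction}.

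For \emph{existence}, I would use an excursion/Kac-type construction. Pick a point $x_{0}\in L$ and a relatively compact open neighborhood $K$ of $x_{0}$ (with respect to the trace topology on $L$); by recurrence, the return time $\tau_{K}:=\inf\{n\ge 1:X_{n}\in K\}$ satisfies $\Prob_{x_{0}}(\tau_{K}<\infty)=1$. Define
$$ \nu(B)\ :=\ \Erw_{x_{0}}\sum_{n=0}^{\tau_{K}-1}\1_{B}(X_{n}),\qquad B\in\mathcal{B}(\R). $$
The usual shift argument (splitting the sum at $n=1$ and using the strong Markov property at time $1$) shows that $\nu P=\nu$. The nontrivial point is local finiteness, i.e.\ $\nu(C)<\infty$ for every compact $C$. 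Here I would invoke the local contraction: for any compact $C$, the excursion cannot spend too much time in $C$ without bringing the chain arbitrarily close to the chain started from $x_{0}$ (on $C$), which combined with recurrence of the latter back to $K$ forces the expected number of visits to $C$ before $\tau_{K}$ to be finite. This part is standard in the Benda/Peigné--Woess framework but is the least delicate.

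The \emph{main obstacle} is uniqueness, since the chain is only topologically recurrent and not a priori Harris recurrent, so one cannot simply appeal to the usual theory of $\psi$-irreducible Harris chains. My plan is the following. Let $\nu_{1},\nu_{2}$ be two invariant locally finite measures; consider the two-point chain $(X_{n}^{x},X_{n}^{y})$ driven by the same innovations $(M_{n},Q_{n})$. The local contraction \eqref{eq: local contraction} guarantees that on the event $\{X_{n}^{x}\in K\}$ for any compact $K\subset L$, the distance $|X_{n}^{x}-X_{n}^{y}|$ tends to $0$ a.s. Coupled with recurrence of $X_{n}^{x}$ to $K$, this yields an asymptotic coupling between the two marginal chains. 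Feeding this coupling into the Chacon--Ornstein/Hopf ratio ergodic theorem (applied to the conservative chain on the locally finite measure space $(\R,\nu_{i})$) shows that the Radon--Nikodym density $d\nu_{1}/d\nu_{2}$ is $P$-invariant and hence, by local contraction, constant along typical trajectories, which forces $\nu_{1}=c\,\nu_{2}$ for some constant $c>0$.

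The technical crux is verifying that the asymptotic coupling really translates into the invariance of $d\nu_{1}/d\nu_{2}$: one has to argue that for $\nu_{i}$-a.e.\ $x,y$ the laws of $X_{n}^{x}$ and $X_{n}^{y}$ become mutually absolutely continuous on compact sets in the Cesàro sense, which in turn uses both Feller continuity of $P$ and the fact that $L$ is the \emph{unique} minimal closed recurrent set (so that $\nu_{i}$ is supported on $L$). Once this is achieved, the uniqueness statement follows. Since the lemma is quoted from \cite{Benda:98b,PeigneWoess:11a}, I would not reproduce the full computations but only indicate the coupling--then--ratio-ergodic scheme as above.
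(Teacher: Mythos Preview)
The paper does not prove this lemma at all: it is quoted without proof and attributed to \cite[Theorem 3.8]{Benda:98b} and \cite[Theorem 2.13]{PeigneWoess:11a}. So there is no proof in the paper to compare your proposal against, and your closing remark that you ``would not reproduce the full computations'' is in fact exactly what the authors do.

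That said, a brief comment on your sketch. Your uniqueness outline (asymptotic coupling from local contraction, then a ratio-ergodic argument to force $d\nu_{1}/d\nu_{2}$ to be constant) is indeed close in spirit to what is done in the cited references. The existence part, however, is shakier than you suggest. The Kac-type formula $\nu(B)=\Erw_{x_{0}}\sum_{n=0}^{\tau_{K}-1}\1_{B}(X_{n})$ is guaranteed to produce an invariant measure only under some regeneration or Harris-type structure; for a chain that is merely \emph{topologically} recurrent on $L$, starting from a single point $x_{0}$ and returning to a \emph{set} $K$ does not automatically give $\nu P=\nu$ (the entrance law into $K$ at time $\tau_{K}$ need not be $\delta_{x_{0}}$, so the shift-by-one argument does not close). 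In the Benda/Peign\'e--Woess setting one instead builds $\nu$ through limits of normalized occupation measures together with the Feller property and local contraction, or first establishes a quasi-compactness/conservativity statement that legitimizes the excursion construction. If you want to give a genuine proof rather than a citation, that is the gap you would need to fill.
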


The Markov chain $(X_{n})_{n\ge 0}$ is called \emph{positive recurrent} if $\nu(L)<\infty$ and \emph{null recurrent}, otherwise.

\vspace{.1cm}
Our third lemma was stated as Proposition 1.3 in \cite{Benda:98b}. Since this report has never been published, we present a short proof.

\begin{Lemma}\label{lem:2}
Let $(X_{n})_{n\ge 0}$ be a locally contractive Markov chain and $U$ an open subset of $\R$. Then $\Prob(X_{n}^{x}\in U~{\rm i.o.})<1$ for some $x\in\R$ implies $\sum_{n\ge 0} \Prob(X_{n}^{y}\in K)<\infty$ for all $y\in\R$ and all compact $K\subset U$.
\end{Lemma}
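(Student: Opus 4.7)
The plan is to prove the lemma in two stages. Stage one uses the local-contractivity hypothesis \eqref{eq: local contraction} to propagate the tail-event bound at $x$ to arbitrary starting points, showing that $\Prob^{y}(X_{n}\in K\text{ i.o.})=0$ for every $y\in\R$ and every compact $K\subset U$. Stage two upgrades this a.s.\ finiteness of the number of visits to $K$ into finiteness of its expectation by a strong-Markov return-time argument.

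For stage one, I choose an open set $V$ with $K\subset V\subset\overline V\subset U$ and $\overline V$ compact, and set $\delta:=d(K,V^{c})>0$. Local contractivity, applied to the two chains $(X_{n}^{y})$ and $(X_{n}^{x})$ with compact set $\overline V$, yields $|X_{n}^{y}-X_{n}^{x}|\,\mathbbm{1}_{\{X_{n}^{y}\in\overline V\}}\to 0$ a.s., hence there is an a.s.-finite random index $N=N(y,x)$ past which the occurrence of $\{X_{n}^{y}\in K\}$ forces $|X_{n}^{y}-X_{n}^{x}|<\delta$ and therefore $X_{n}^{x}\in V\subset U$. This gives up to $\Prob$-null sets the inclusion $\{X_{n}^{y}\in K\text{ i.o.}\}\subset\{X_{n}^{x}\in U\text{ i.o.}\}$, so $h_{K}(y):=\Prob^{y}(X_{n}\in K\text{ i.o.})\le c:=\Prob(X_{n}^{x}\in U\text{ i.o.})<1$ for every $y$. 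Since $A_{K}:=\{X_{n}\in K\text{ i.o.}\}$ is a tail event, the Markov property gives $\Erw^{y}[\mathbbm{1}_{A_{K}}\mid\cF_{n}]=h_{K}(X_{n}^{y})$, and L\'evy's upward theorem yields $h_{K}(X_{n}^{y})\to\mathbbm{1}_{A_{K}}$ a.s. The uniform bound $h_{K}\le c<1$ forces the $\{0,1\}$-valued limit to equal $0$, so $\Prob^{y}(A_{K})=0$ for every $y$ and every compact $K\subset U$.

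For stage two, I decompose the visits to $K$ via the successive hitting times $\sigma_{0}:=\inf\{n\ge 0:X_{n}^{y}\in K\}$ and $\sigma_{k+1}:=\inf\{n>\sigma_{k}:X_{n}^{y}\in K\}$, which satisfy $\sum_{n\ge 0}\Prob^{y}(X_{n}\in K)=\sum_{k\ge 0}\Prob^{y}(\sigma_{k}<\infty)$. The strong Markov property at $\sigma_{k}$ gives $\Prob^{y}(\sigma_{k+1}<\infty)\le\rho\cdot\Prob^{y}(\sigma_{k}<\infty)$ with $\rho:=\sup_{z\in K}\Prob^{z}(\tau_{K}^{+}<\infty)$ and $\tau_{K}^{+}:=\inf\{n\ge 1:X_{n}\in K\}$, so the task reduces to showing $\rho<1$: then $\sum_{k}\Prob^{y}(\sigma_{k}<\infty)\le\sum_{k}\rho^{k}<\infty$. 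The main obstacle is this last step, because promoting the pointwise vanishing of $h_{K}$ on $K$ into the uniform gap $1-\rho>0$ is delicate: the Feller property of $P$ only yields upper semicontinuity of $z\mapsto\Prob^{z}(\tau_{K}^{+}\le n)$ for each fixed $n$, not of the return probability $z\mapsto\Prob^{z}(\tau_{K}^{+}<\infty)$ itself. To bridge this gap I would argue by contradiction: if $\rho=1$, extract $z_{m}\in K$ with $\Prob^{z_{m}}(\tau_{K}^{+}<\infty)\to 1$ and, by compactness, a limit point $z^{\ast}\in K$; combining the Feller semicontinuity with iteration of the strong Markov property at the successive returns of $(X_{n}^{z^{\ast}})$ then produces a chain from $z^{\ast}$ that visits $K$ infinitely often with positive probability, contradicting $\Prob^{z^{\ast}}(A_{K})=0$.
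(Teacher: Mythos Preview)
Your Stage 1 is correct and rather elegant: the L\'evy upward-martingale trick upgrading the uniform bound $h_{K}\le c<1$ to $h_{K}\equiv 0$ works exactly as stated.

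The gap is in Stage 2. You correctly isolate the difficulty as showing $\rho:=\sup_{z\in K}\Prob^{z}(\tau_{K}^{+}<\infty)<1$, but the contradiction sketch does not close. Feller together with closedness of $K$ makes each truncated return probability $\rho_{n}(z):=\Prob^{z}(\tau_{K}^{+}\le n)$ upper semicontinuous in $z$, yet $\rho=\sup_{n}\rho_{n}$ is an \emph{increasing} limit of upper semicontinuous functions and therefore need not itself be upper semicontinuous. So from $z_{m}\to z^{*}$ and $\rho(z_{m})\to 1$ you cannot infer $\rho(z^{*})\ge 1$; whatever semicontinuity survives points the wrong way. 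And even if you had $\rho(z^{*})=1$, that alone does not give $\Prob^{z^{*}}(A_{K})>0$: you would need $\rho\equiv 1$ along \emph{all} successive random return points of $(X_{n}^{z^{*}})$, which is a much stronger statement than anything your compactness argument produces. In short, the conclusion of Stage 1, namely $\Prob^{z}(A_{K})=0$ for every $z$, does not by itself imply $\rho<1$, and the Feller-plus-compactness route does not supply the missing uniformity.

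The paper avoids $\rho$ altogether by staying on the coupling level. From the hypothesis there is a fixed event $E=\{X_{n}^{x}\notin U\text{ for all }n\ge n_{1}\}$ with $\Prob(E)>0$, and local contractivity against the \emph{single} reference trajectory $(X_{n}^{x})$ forces, on $E$, that $X_{n}^{z}\notin K$ eventually for every $z$ in the compact set $K_{y}:=K\cup\{y\}$. The crucial step is that this ``eventually'' is taken uniformly over $z\in K_{y}$ (for the RDE this is immediate from $|X_{n}^{z}-X_{n}^{x}|=\Pi_{n}|z-x|$), yielding a single $n_{2}$ and a single $\delta>0$ with $\Prob(X_{n}^{z}\notin K\text{ for all }n\ge n_{2})\ge\delta$ simultaneously for every $z\in K_{y}$. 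Geometric decay of $\Prob^{y}(\sigma_{kn_{2}}<\infty)$ then follows by iterating the strong Markov property. Replace your Stage 2 by this direct argument; Stage 1 can be kept, but the uniform escape bound has to come from the coupled event $E$ rather than from a pointwise-to-uniform upgrade via Feller.
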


\begin{proof}
Take $x$ such that $\Prob(X_{n}^{x}\in U\ \text{i.o.})<1$. Then there exists $n_{1}\in\N$ such that
$$ \Prob(X_{n}^{x}\notin U\text{ for all }n\ge n_{1})\ >\ 0. $$
Now fix an arbitrary $y\in \R$ and a compact $K\subset U$. Defining the compact set $K_{y}:=K \cup\{y\}$, the local contractivity implies that for some $n_{2}\in\N$
\begin{equation}\label{star}
\Prob\left(X_{n}^{z}\notin K\text{ for all } n\ge n_{2}\ \text{ and some } z\in K_{y}\right)\ =:\ \delta\ >\ 0.
\end{equation}
For $z\in K_{y}$, consider the sequence of stopping times
\begin{align*}
T_{0}^{z}\ &=\ 0\quad\text{and}\quad T_{n}^{z}\ =\ \inf\{ k> T_{n-1}^{z}:\; X_{k}^{z}\in K\}\quad\text{for }n\ge 1.
\end{align*}
Then \eqref{star} implies that $\Prob(T^{z}_{n_{2}}<\infty)\le 1-\delta$ for each $z\in K_{y}$. Consequently,
$$ \Prob\left(T^{y}_{nn_{2}}<\infty\right)\ \le\ (1-\delta) \Prob\left(T_{(n-1)n_{2}}^{y}<\infty\right)\ \le\ (1-\delta)^{n} $$
for all $n\ge 1$ and thus
\begin{align*}
\sum_{n\ge 0} \Prob(X_{n}^{y} \in K)\ &=\ \Erw\left(\sum_{n\ge 0}\1_{\{X_{n}^{y}\in K\}}\right)\ \le\ \sum_{n\ge 0}\Prob\left(T_{n}^{y} <\infty\right)\\
&\le\ \sum_{n\ge 0} n_{2}\,\Prob\left(T_{n n_{2}}^{y} <\infty\right)\ \le\ n_{2}/\delta\ <\ \infty.\tag*\qed
\end{align*}
\end{proof}

A combination of Lemmata \ref{lem:1} and \ref{lem:2} provides us with

\begin{Prop}\label{transient}
For a locally contractive Markov chain $(X_{n})_{n\ge 0}$ on $\R$, the following assertions are equivalent:
\begin{description}[(b)]
\item[(a)] The chain is transient.\vspace{.1cm}
\item[(b)] $\lim_{n\to\infty}|X_{n}^{x}|=\infty$ a.s. for all $x\in\R$.\vspace{.1cm}
\item[(c)] $\Prob(X_{n}^{x}\in U~{\rm i.o.})< 1$ for any bounded open $U\subset\R$ and some/all $x\in\R$.\vspace{.1cm}
\item[(d)] $\sum_{n\ge 0}\Prob(X_{n}^{x}\in K)<\infty$ for any compact $K\subset\R$ and some/all $x\in\R$.
\end{description}
\end{Prop}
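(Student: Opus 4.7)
The plan is to establish the cycle of implications (a) $\Leftrightarrow$ (b) $\Rightarrow$ (c) $\Rightarrow$ (d) $\Rightarrow$ (b), with the two nontrivial links being (c) $\Rightarrow$ (d), which is essentially a restatement of Lemma \ref{lem:2}, and (d) $\Rightarrow$ (b), where the Lemma \ref{lem:1} dichotomy is invoked to bridge a ``some $x$'' statement to an ``all $x$'' statement. The ``some/all'' wording in (c) and (d) will be handled by observing that ``all'' trivially implies ``some'', and that the reverse direction is built into each implication.

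First I would note that (a) $\Leftrightarrow$ (b) is immediate: the chain is transient precisely when \eqref{eq:4} holds, and $|X_{n}^{x}-x|\to\infty$ if and only if $|X_{n}^{x}|\to\infty$, because $x$ is a fixed constant. The implication (b) $\Rightarrow$ (c) is also routine: if $|X_{n}^{x}|\to\infty$ a.s.\ for every $x$, then for any bounded open set $U$ the event $\{X_{n}^{x}\in U \text{ i.o.}\}$ has probability $0$, so (c) holds for every $x$.

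For (c) $\Rightarrow$ (d), fix any compact $K\subset\R$ and any starting point $y$. Enclose $K$ in a bounded open set $U$; by (c), there is some $x$ with $\Prob(X_{n}^{x}\in U \text{ i.o.})<1$. Lemma \ref{lem:2} applied to this $U$ yields $\sum_{n\ge 0}\Prob(X_{n}^{y}\in K)<\infty$, which is precisely (d). For (d) $\Rightarrow$ (b), fix an $x$ for which (d) holds and apply the first Borel--Cantelli lemma to the compact sets $K_{R}:=[-R,R]$, $R\in\N$: each sum $\sum_{n\ge 0}\Prob(X_{n}^{x}\in K_{R})$ is finite, so $\Prob(|X_{n}^{x}|\le R \text{ i.o.})=0$ for every $R$. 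Taking a countable intersection over $R$ gives $|X_{n}^{x}|\to\infty$ a.s. Since this amounts to the realization of the event in \eqref{eq:4} for this particular $x$, the dichotomy of Lemma \ref{lem:1} forces \eqref{eq:4} to hold simultaneously for every $x\in\R$, which is (b).

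The only place where care is needed is the ``some/all'' quantifier: Lemma \ref{lem:2} is designed precisely to upgrade a single-point ``some $x$'' hypothesis in (c) to a universal ``all $y$'' conclusion in (d), and Lemma \ref{lem:1} performs the analogous upgrade when passing from (d), a summability statement at one point, back to (b), an a.s.\ divergence statement at all points. Neither step involves any genuine difficulty beyond these invocations; the bulk of the conceptual work has already been done in the two preceding lemmata.
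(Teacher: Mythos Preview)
Your proof is correct and follows essentially the same route as the paper: the equivalence of (a) and (b) is immediate, (b) $\Rightarrow$ (c) is trivial, (c) $\Rightarrow$ (d) is Lemma~\ref{lem:2}, and Borel--Cantelli closes the cycle. The paper phrases the last step as (d) $\Rightarrow$ (c) and declares the equivalence of (a), (b), (c) ``obvious'', whereas you go (d) $\Rightarrow$ (b) directly and make explicit the appeal to the Lemma~\ref{lem:1} dichotomy to pass from ``some $x$'' to ``all $x$''; this is arguably cleaner, since the direct implication (c) $\Rightarrow$ (b) that the paper sweeps into ``obvious'' really does require either the cycle through (d) or the attractor-set machinery introduced only afterwards.
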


\begin{proof}
The equivalence of (a), (b) and (c) is obvious. By Lemma \ref{lem:2}, (c) entails (d), while the Borel-Cantelli lemma gives the converse.\qed
\end{proof}

Now we consider the case when \eqref{eq:4} is satisfied. For any $\omega$, we define $L^{x}(\omega)$ to be the set of accumulation points of $(X_{n}^x(\omega))_{n\ge 0}$, i.e.
$$ L^{x}(\omega)\ :=\ \bigcap_{m\ge 1}\overline{\{X_{n}^x(\omega):n\ge m\}}, $$
where $\overline{C}$ denotes the closure of a set $C$. It is known \cite{Benda:98b,PeigneWoess:11a} that $L^x(\omega)$ does not depend on $x$ and $\omega$. In fact, there exists a deterministic set $L\subset\R$ (called the \emph{attractor set} or \emph{limit set}) such that
$$ \Prob\{L^x(\cdot) = L \ \text{ for all } x\in \R\}=1. $$

\begin{Prop}\label{recur}
For a locally contractive Markov chain $(X_{n})_{n\ge 0}$ on $\R$, the following assertions are equivalent:
\begin{description}[(b)]
\item[(a)] The chain is recurrent.\vspace{.1cm}
\item[(b)] $\liminf_{n\to\infty}|X_{n}^x-x|<\infty$ a.s. for all $x\in\R$.\vspace{.1cm}
\item[(c)] $\liminf_{n\to\infty}|X_{n}|<\infty$ a.s.\vspace{.1cm}
\item[(d)] $\sum_{n\ge 0}\Prob\{X_{n}^{x}\in K\}=\infty$ for a nonempty compact set $K$ and some/all $x\in\R$.
\end{description}
\end{Prop}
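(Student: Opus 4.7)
The plan is to prove the cycle (a) $\Leftrightarrow$ (b) $\Leftrightarrow$ (c) and (b) $\Leftrightarrow$ (d), each leg relying on the dichotomy of Lemma \ref{lem:1}, the contrapositive of Lemma \ref{lem:2}, and the deterministic attractor set $L$ recalled immediately above the statement.

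For (a) $\Rightarrow$ (b), I would fix $x_0$ in the nonempty closed set $L$ furnished by the definition of recurrence and choose a bounded open $U$ intersecting $L$; recurrence gives $\Prob(X_n^{x_0}\in U\ \text{i.o.})=1$, hence $\Prob(\lim_n|X_n^{x_0}-x_0|=\infty)=0$, and Lemma \ref{lem:1} propagates this to every $x\in\R$. The step (b) $\Rightarrow$ (c) is immediate by conditioning on $X_0$, while (c) $\Rightarrow$ (b) follows by picking some $x$ in the support of $X_0$ with $\Prob(\lim_n|X_n^x|=\infty)<1$ and invoking Lemma \ref{lem:1} once more.

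For (b) $\Rightarrow$ (d), I argue contrapositively: if $\sum_n\Prob(X_n^x\in K)<\infty$ for every compact $K$ and every $x$, then taking $K=[-m,m]$ and applying the first Borel--Cantelli lemma yields $\Prob(|X_n^x|\le m\ \text{i.o.})=0$ for each $m$, so $\liminf_n|X_n^x|=\infty$ a.s., contradicting (b). For (d) $\Rightarrow$ (b), choose any open $U\supset K$ and apply the contrapositive of Lemma \ref{lem:2}: $\sum_n\Prob(X_n^x\in K)=\infty$ forces $\Prob(X_n^y\in U\ \text{i.o.})=1$ for every $y\in\R$, whence $\liminf_n|X_n^y|<\infty$ a.s.

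It remains to close (b) $\Rightarrow$ (a), which rests on the attractor set discussed just above the proposition: under (b) the chain is not transient, so Lemma \ref{lem:1} places us in the regime \eqref{eq:3}, and then $L$ is well-defined and closed with $L^x(\cdot)=L$ a.s.\ for every $x$. Since $\liminf_n|X_n^x|<\infty$ a.s., $L^x$ is nonempty, so $L$ is nonempty; for any $x\in L$ and any open $U$ intersecting $L$, any $z\in L\cap U$ belongs to $L^x$ a.s., forcing $\Prob(X_n^x\in U\ \text{i.o.})=1$. This final step is the main obstacle, as it hinges on the attractor-set result cited from \cite{Benda:98b,PeigneWoess:11a}; everything else is a clean assembly of Lemma \ref{lem:1} and Lemma \ref{lem:2}.
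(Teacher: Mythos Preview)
Your argument is correct, and the overall logic matches the paper's in spirit: both rest on the dichotomy of Lemma~\ref{lem:1} and on Lemma~\ref{lem:2} (the latter packaged into Proposition~\ref{transient}). The paper, however, organizes things differently. It simply observes that the negations of (b), (c) and (d) in Proposition~\ref{transient} are precisely (b), (c) and (d) here, so all equivalences among (b), (c), (d)---including the ``some/all'' clause in (d), which follows by negating the ``some/all'' in Proposition~\ref{transient}(d)---are immediate. The only point the paper treats explicitly is the passage from (a) to ``(d) for \emph{all} $x$'', and it does this by a direct coupling: given $y$ with $\sum_n \1_{\{|X_n^y|\le b\}}=\infty$ a.s., local contractivity makes $\sigma_x:=\sup\{n:|X_n^x-X_n^y|>b,\,|X_n^y|\le b\}$ a.s.\ finite, so $X_n^x\in[-2b,2b]$ whenever $X_n^y\in[-b,b]$ for $n>\sigma_x$.

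Your route differs in two places. First, your (b)$\Rightarrow$(d) contrapositive only yields ``(d) for \emph{some} $x$''; the ``all $x$'' part is implicit in your (d)$\Rightarrow$(b) step (since $\Prob(X_n^y\in U\text{ i.o.})=1$ for every $y$ gives $\sum_n\Prob(X_n^y\in\bar U)=\infty$ for every $y$), but it would be worth stating this. Second, for (b)$\Rightarrow$(a) you invoke the attractor-set result quoted just above the proposition, whereas the paper does not explicitly isolate this implication at all; your treatment is arguably more transparent here, at the cost of citing the external fact from \cite{Benda:98b,PeigneWoess:11a}. The paper's coupling argument is more self-contained for the ``all $x$'' transfer, while your use of Lemma~\ref{lem:2} avoids reproving that transfer by hand.
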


\begin{proof}
In view of the contrapositive Proposition \ref{transient}, we must only verify for ``(a)$\RA$(d)'' that the sum in (d) is indeed infinite for some compact $K\ne\oslash$ and \emph{all} $x\in\R$. W.l.o.g. let $K=[-2b,2b]$ for some $b>0$ and $y\in\R$ such that, by (a), $\sum_{n\ge 0}\1_{\{|X_{n}^{y}|\le b\}}=\infty$ a.s.\ and thus $\sum_{n\ge 0}\Prob(|X_{n}^{y}|\le b)=\infty$. Local contractivity implies that $\sigma_{x}:=\sup\{n\ge 0:|X_{n}^{x}-X_{n}^{y}|>b,\,|X_{n}^{y}|\le b\}$ is a.s.\ finite for \emph{all} $x\in\R$. Consequently, $X_{n}^{x}$ hits $[-2b,2b]$ whenever $X_{n}^{y}$ hits $[-b,b]$ for $n>\sigma_{x}$, in particular $\sum_{n\ge 0}\1_{\{|X_{n}^{x}|\le 2b\}}=\infty$ a.s.\ and thus $\sum_{n\ge 0}\Prob(|X_{n}^{x}|\le 2b)=\infty$ for all $x\in\R$.\qed
\end{proof}

\section{Results}\label{results}

In order to formulate the main result, we need
\begin{align}
\begin{split}\label{eq:def of s_* and s^*}
s_{*}\ :=\ &\liminf_{t\to\infty}\,t\,\Prob(\log Q>t),\\
s^{*}\ :=\ &\limsup_{t\to\infty}\,t\,\Prob(\log Q>t)
\end{split}
\end{align}
for which $0\le s_{*}\le s^{*}\le\infty$ holds true. In some places, the condition
\begin{equation}\label{tail2}
\lim_{t\to\infty}t\,\Prob(\log Q>t)\ =\ s\ \in\ [0,\infty]
\end{equation}
will be used. Finally, put $\fm^{\pm}:=\Erw\log_{\pm}M$ and, if $\fm^{+}\wedge\fm^{-}<\infty$,
$$ \fm\ :=\ \Erw\log M\ =\ \fm^{+}-\fm^{-} $$
which is then in $[-\infty,0)$ by our standing assumption $\Pi_{n}\to 0$ a.s.

\begin{Theorem}\label{main11}
Let $\fm\in [-\infty,0)$ and \eqref{trivial}, \eqref{trivial2}, \eqref{30} be valid. Then the following assertions hold:
\begin{description}[(b)]\itemsep2pt
\item[(a)] $(X_{n})_{n\ge 0}$ is null recurrent if $s^{*}<-\fm$.
\item[(b)] $(X_{n})_{n\ge 0}$ is transient if $s_{*}>-\fm\ ($thus $\fm>-\infty)$.
\end{description}
\end{Theorem}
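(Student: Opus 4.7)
The approach is to verify the criteria of Propositions \ref{transient} and \ref{recur} in the two cases separately. Since \eqref{30} makes $(X_n)_{n\ge 0}$ contractive, hence locally contractive, the transience/recurrence dichotomy reduces to the convergence or divergence of $\sum_n \Prob(X_n^x \in K)$ for compact $K \subset \R_+$. The starting point for estimating this sum is the forward representation
\begin{equation*}
X_n\ =\ \Pi_n X_0 + \sum_{k=1}^n \frac{\Pi_n}{\Pi_k}\, Q_k,
\end{equation*}
under which $\{X_n \le C\}$ forces $\log Q_k \le \log C + (S_k - S_n)$ for every $k \le n$, where $S_j := \log \Pi_j$ is the associated random walk with drift $\fm < 0$.

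For part (b), I would condition on the whole sequence $(M_j)_{j\ge 1}$, which makes the $Q_k$ conditionally independent with $Q_k$ depending only on the corresponding $M_k$. On the high-probability event $A_n$ where the LLN gives $S_n - S_k \le (-\fm + \eta)(n-k)$ uniformly over a bulk range $k \in [\delta n, n]$, the one-sided bound $\Prob(\log Q > t) \ge (s_* - \eta)/t$ for large $t$ (valid by the definition of $s_*$) yields
\begin{equation*}
\Prob\bigl(X_n \le C \,\big|\, (M_j)_{j\ge 1}\bigr)\ \lesssim\ \prod_{k=\delta n}^{n} \left(1 - \frac{s_* - \eta}{(-\fm + \eta)(n-k+1)}\right)\ \lesssim\ n^{-(s_*-\eta)/(-\fm+\eta)}.
\end{equation*}
With $\eta > 0$ small enough for the exponent to exceed $1$ (possible because $s_* > -\fm$), summability of $\Prob(X_n \le C)$ follows and Proposition \ref{transient}(d) delivers transience.

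For part (a), I would employ the stochastic comparison technique with Kellerer's theorem, as the authors forecast (Section \ref{M<=gamma<1} and Proposition \ref{Kellerer's result}). Positive recurrence is already excluded by $I_Q = \infty$ via \cite[Theorem 2.1]{GolMal:00}, so it remains to establish recurrence. In the favourable case $M \le \gamma$ almost surely with $s^* < -\log \gamma$, the pathwise coupling yields $X_n \le Y_n$ for $Y_n := \gamma Y_{n-1} + Q_n$ started at $Y_0 = X_0$; a Kellerer-type null recurrence result for $(Y_n)$ then gives $\liminf_n X_n \le \liminf_n Y_n < \infty$ a.s., hence recurrence of $(X_n)$ by Proposition \ref{recur}(c). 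In the general case, where $M$ need not be bounded above by any such $\gamma$, I would pick $\gamma \in (e^{\fm}, e^{-s^*})$ (available precisely because $s^* < -\fm$) and exploit that by the LLN the ratios $\Pi_{n+\ell}/\Pi_n$ almost surely eventually lie below $\gamma^\ell$ for a large fixed block length $\ell$; a blockwise comparison replacing each block product by the deterministic $\gamma^\ell$ then reduces the general case to the bounded one.

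The principal obstacle lies in this reduction in part (a): Kellerer's criterion is tight for deterministic $M$ using the contraction exponent $-\log \gamma$, while the sharp general threshold is the strictly smaller $-\fm = -\Erw\log M$. The coupling must therefore be engineered so that its effective contraction rate matches $\fm$ rather than an essential supremum of $\log M$, which forces a block or time-change construction together with careful control of the LLN fluctuations of $(S_n)$. In part (b), the dependence between $Q_k$ and $M_k$ is a minor technical point, handled throughout by conditioning on the full sequence $(M_j)_{j\ge 1}$.
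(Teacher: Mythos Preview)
Your outline has the right instincts, but there is a genuine gap in each part, and in both cases the paper's remedy is the same device: pass to a subsequence along carefully chosen \emph{random} stopping times rather than rely on the LLN directly.

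\textbf{Part (b).} Conditioning on $(M_j)_{j\ge 1}$ does not reduce the $M$--$Q$ dependence to a minor technicality; it exposes it. After conditioning, the factors in your product are $\Prob(\log Q_k\le t_k\mid M_k)$, and nothing in the hypotheses forces the \emph{conditional} tail $\Prob(\log Q>t\mid M=m)$ to be bounded below by $(s_*-\eta)/t$ for every $m$; the assumption $s_*>-\fm$ concerns only the unconditional law of $Q$. (A secondary issue: making $\Prob(A_n^c)$ summable from the bare LLN, with only $\Erw|\log M|<\infty$ assumed, is not automatic either.) The paper sidesteps both problems by taking $(\sigma_n)=(\sgn(c))$, the ascending ladder epochs of $(S_n+cn)$ for some $c\in(-\fm,s_*)$. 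Lemma~\ref{lem:bounding lemma}(b) then yields the \emph{almost sure} bound $\wh X_{\sigma_n}\ge\wh Y_n$, where $(\wh Y_n)$ is built from the deterministic multiplier $\gamma=e^{-c}$ and aggregated innovations $Q_k^*$; the tail of $Q^*$ is controlled by the \emph{unconditional} tail of $Q$ via Lemma~\ref{tail lemma 2}, and transience then comes from Proposition~\ref{Kellerer's result}(a) together with the comparison Lemma~\ref{comparison lemma}.

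\textbf{Part (a).} You correctly locate the obstacle, but the proposed fix---fixed-length blocks with the LLN giving $\Pi_{n+\ell}/\Pi_n\le\gamma^\ell$ ``eventually''---does not produce an almost sure comparison: the block products are iid and will exceed $\gamma^\ell$ infinitely often, so no deterministic $\gamma$ dominates them. The paper instead uses the level-crossing times $\sigma=\inf\{n\ge 1:S_n<-c\}$, for which $M_n^*:=\Pi_{\sigma_n}/\Pi_{\sigma_{n-1}}<e^{-c}$ holds \emph{surely} by construction, while $\limsup_t t\,\Prob(\log Q_1^*>t)\le s^*\,\Erw\sigma(c)$ by Lemma~\ref{tail lemma 2}. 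The matching of the threshold to $-\fm$ (rather than to an essential supremum of $\log M$) is achieved through the elementary renewal theorem: $\Erw\sigma(c)/c\to 1/(-\fm)$ as $c\to\infty$, so one can choose $c$ with $s^*\,\Erw\sigma(c)/c<1$, after which a direct Raabe-test estimate on $\sum_n\Prob(\wh X_{\sigma_n}\le t)$ (essentially Kellerer's recurrence argument for the stopped chain) finishes the proof.
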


\begin{Rem}\label{zer}\rm
In the recent paper \cite{Zerner:2016+}, Zerner studies the recurrence/transience of $(X_{n})_{n\ge 0}$ defined by \eqref{chain} in the more general setting when $M$ is a nonnegative $d\times d$ random matrix and $Q$ an $\R_{+}^{d}$-valued random vector. A specialization of his Theorem 5 to the one-dimensional case $d=1$ reads as follows. Suppose that
\begin{equation}\label{boun}
M\in [a,b]\quad\text{a.s.}
\end{equation}
for some $0<a<b<\infty$ and that either $\lim_{t\to\infty}\,t^\beta\,\Prob(\log Q>t)=0$ for some $\beta\in (2/3,1)$, or $s_{*}>-\fm$. Let $y\in (0,\infty)$ be such that $\Prob(Q\le y)>0$. Then $(X_{n})_{n\ge 0}$ is recurrent if, and only if,
\begin{equation}\label{eq:Zerner condition}
\sum_{n\ge 0}\prod_{k=0}^{n}\Prob(Q\le ye^{-k\fm})\ =\ \infty.
\end{equation}
It is not difficult to verify that \eqref{eq:Zerner condition} holds if $s^{*}<-\fm$ and that it fails if $s_{*}>-\fm$. Therefore, Zerner's result contains our Theorem \ref{main11} under the additional assumption \eqref{boun}.
\end{Rem}

\begin{Rem}\label{Lyapunov functions}\rm
If $\log M$ and $\log Q$ are both integrable and $D(x):=\log X_{1}^{x}-\log X_{0}^{x}$, then
$$ \Erw D(x)\ =\ \Erw\log(M+x^{-1}Q)\ \xrightarrow{x\to\infty}\ 0 $$
shows that $(\log X_{n})_{n\ge 0}$ forms a Markov chain with asymptotic drift zero. Such  chains are studied at length by Denisov, Korshunov and Wachtel in a recent monograph-like publication \cite{DenKorWach:16}. They also provide conditions for recurrence and transience in terms of truncated moments of $D(x)$, see their Corollaries 2.11 and 2.16, but these appear to be more complicated and more restrictive than ours.
\end{Rem}

\begin{Rem}\rm
Here is a comment on the boundary case $s=-\fm$ not covered by Theorem \ref{main11}. Assuming $M=e^\fm$ a.s., it can be shown that the null recurrence/transience of $(X_{n})_{n\ge 0}$ is equivalent to the
divergence/convergence of the series
$$ \sum_{n\ge 1}\Prob\left(\max_{1\le k\le n}\,e^{\fm (k-1)}Q_{k}\le e^{x}\right)\ =\ \sum_{n\ge 1}\prod_{k=0}^{n-1} F(x-\fm k) $$
for some/all $x\ge 0$, where $F(y):=\Prob(\log Q\le y)$. Indeed, assuming $X_{0}=0$, the transience assertion follows when using $\Prob(X_{n}\le e^{x})\le \Prob(\max_{1\le k\le n}\,e^{\fm (k-1)}Q_{k}\le e^{x})$, while the null recurrence claim is shown by a thorough inspection and adjustment of the proof of Theorem \ref{main11}(a). Using Kummer's test as stated in \cite{Tong:94}, we then further conclude that $(X_{n})_{n\ge 0}$ is null recurrent if, and only if, there exist positive $p_{1},p_{2},\ldots$ such that $F(-\fm k)\ge p_{k}/p_{k+1}$ and $\sum_{k\ge
1}(1/p_{k})=\infty$. For applications, the following sufficient condition, which is a consequence of Bertrand's test \cite[p.~408]{Stromberg:81}, may be more convenient. If
$$ 1-F(x)\ =\ \frac{-\fm}{x}+\frac{f(x)}{x\log x}, $$
then $(X_{n})_{n\ge 0}$ is null recurrent if $\displaystyle\limsup_{x\to\infty}f(x)<-\fm$, and transient if $\displaystyle\liminf_{x\to\infty}f(x)>-\fm$.
\end{Rem}

If $\fm^{-}=\fm^{+}=\infty$ and $s^{*}<\infty$, then $(X_{n})_{n\ge 0}$ is \emph{always} null recurrent as the next theorem will confirm. Its proof will be based on finding an appropriate subsequence of $(X_{n})_{n\ge 0}$ which satisfies the assumptions of Theorem \ref{main11}(a).

\begin{Theorem}\label{main12}
Let $\fm^{+}=\fm^{-}=\infty$, $s^\ast<\infty$ and \eqref{trivial}, \eqref{trivial2}, \eqref{30} be valid. Then $(X_{n})_{n\ge 0}$ is null recurrent.
\end{Theorem}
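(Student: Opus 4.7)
The idea is to identify a subsequence $(Y_{k})_{k\ge 0}:=(X_{T_{k}})_{k\ge 0}$ of $(X_{n})_{n\ge 0}$ along stopping times $T_{k}$ for which $(Y_{k})_{k\ge 0}$ is itself an RDE chain falling under Theorem~\ref{main11}(a), and then to transfer the conclusion back to $(X_{n})_{n\ge 0}$. The transfer is routine: if $(Y_{k})_{k\ge 0}$ is recurrent then, by Proposition~\ref{recur}(d), $\sum_{k\ge 0}\Prob(X_{T_{k}}^{x}\in K)=\infty$ for some compact $K$ and some $x$, hence $\sum_{n\ge 0}\Prob(X_{n}^{x}\in K)=\infty$ makes $(X_{n})_{n\ge 0}$ recurrent; the Goldie--Maller theorem \cite[Theorem~2.1]{GolMal:00} then rules out positive recurrence via $I_{Q}=\infty$ from \eqref{30}.

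Fix $\gamma\in(0,1)$, set $T_{0}:=0$ and $T_{k+1}:=\inf\{n>T_{k}:\Pi_{n}\le\gamma\Pi_{T_{k}}\}$. Each $T_{k+1}$ is a.s.\ finite by \eqref{30}, and telescoping \eqref{chain} across the block $(T_{k},T_{k+1}]$ yields $Y_{k+1}=\widetilde M_{k+1}Y_{k}+\widetilde Q_{k+1}$ with
$$\widetilde M_{k+1}\,:=\,\frac{\Pi_{T_{k+1}}}{\Pi_{T_{k}}}\,\le\,\gamma\quad\text{a.s.},\qquad \widetilde Q_{k+1}\,:=\,\sum_{j=T_{k}+1}^{T_{k+1}}\frac{\Pi_{T_{k+1}}}{\Pi_{j}}\,Q_{j},$$
and the strong Markov property makes the pairs $(\widetilde M_{k},\widetilde Q_{k})_{k\ge 1}$ iid. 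Since $\Pi_{j}>\gamma\Pi_{T_{k}}\ge\Pi_{T_{k+1}}$ for $T_{k}<j<T_{k+1}$, all weights $\Pi_{T_{k+1}}/\Pi_{j}$ lie in $[0,1]$, giving the crucial estimate $\widetilde Q_{k+1}\le\sum_{j=T_{k}+1}^{T_{k+1}}Q_{j}$.

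Two observations complete the check of the hypotheses of Theorem~\ref{main11}(a) for $(Y_{k})_{k\ge 0}$. First, $(\log\widetilde M_{1})^{+}=0$ gives $\widetilde\fm^{+}=0$; coupled with $\widetilde\Pi_{k}\le\gamma^{k}\to 0$ and the trivial inheritance of \eqref{trivial}, \eqref{trivial2}, we have $\widetilde\fm\in[-\infty,\log\gamma]\subset[-\infty,0)$. Second, $\fm^{-}=\infty$ together with $\log_{-}M\le\log(1/\gamma)$ on $\{M>\gamma\}$ forces $\Erw[\log_{-}M\,\1_{\{M\le\gamma\}}]=\infty$; as $\{T_{1}=1\}=\{M_{1}\le\gamma\}$, this gives
$$\Erw[\log\Pi_{T_{1}}\,\1_{\{T_{1}=1\}}]\,=\,\Erw[\log M_{1}\,\1_{\{M_{1}\le\gamma\}}]\,=\,-\infty,$$
and since $\log\Pi_{T_{1}}\le\log\gamma<0$ a.s., we conclude $\widetilde\fm=-\infty$. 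Thus the condition $s^{*}(\widetilde Q)<-\widetilde\fm$ of Theorem~\ref{main11}(a) collapses to $s^{*}(\widetilde Q)<\infty$, and in view of the upper bound on $\widetilde Q$ it suffices to prove
$$\limsup_{t\to\infty}\,t\,\Prob\!\Big(\log\sum\nolimits_{j=1}^{T_{1}}Q_{j}>t\Big)\,<\,\infty.$$

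The main obstacle is this final tail estimate: $T_{1}$ may have infinite mean in the regime $\fm^{\pm}=\infty$, and it is correlated with the $Q_{j}$ through the shared pairs $(M_{j},Q_{j})$, so naive subexponential "one big jump" heuristics cannot be applied. Establishing the required $O(1/t)$ control on the tail of $\sum_{j=1}^{T_{1}}Q_{j}$ from the assumption $s^{*}<\infty$ is precisely the role of the tail lemma developed in Section~\ref{sec:tail lemma}. Once it is available, Theorem~\ref{main11}(a) applies to $(Y_{k})_{k\ge 0}$ and the opening reduction closes the argument.
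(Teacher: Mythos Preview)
Your reduction---embed $(X_{n})$ into a sub-RDE along renewal stopping times and invoke Theorem~\ref{main11}(a)---mirrors the paper's strategy, but with a different stopping time: you take $T_{1}=\inf\{n:S_{n}\le\log\gamma\}$ (first passage below a fixed level), whereas the paper uses the descending ladder epochs $\sigma_{n}^{<}(c)$ of the tilted walk $S_{n}+cn$ together with the comparison chain $(X_{\sigma_{n}}(\gamma))$ from Lemma~\ref{lem:bounding lemma}(a) and the tail estimate of Lemma~\ref{tail lemma 3}. Your route is in a sense more direct, since you work with $(X_{T_{k}})$ itself and obtain $\widetilde\fm=-\infty$, reducing matters to $s^{*}(\widetilde Q)<\infty$; the paper instead lands on a finite $\fm(\gamma)=-c\,\Erw\sigma_{1}$ and must choose $c>s^{*}$ to arrange $s^{*}\Erw\sigma_{1}<-\fm(\gamma)$.

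The gap lies in your last paragraph. The claim that $T_{1}$ may have infinite mean is wrong: $S_{n}\to-\infty$ a.s.\ makes the weak descending ladder epoch $\tau^{\le}$ of $(S_{n})$ integrable, and $T_{1}\le\tau^{\le}_{N}$ where $N$ counts the nonpositive ladder heights needed to accumulate $\log\gamma$; since these heights are not identically zero, $\Erw N<\infty$ and Wald gives $\Erw T_{1}<\infty$. With this in hand the tail bound is immediate: $\{T_{1}\ge j\}\in\sigma(M_{1},\dots,M_{j-1})$ is independent of $Q_{j}$, so $t\,\Prob(\max_{j\le T_{1}}\log Q_{j}>t)\le t\,\Erw T_{1}\,\Prob(\log Q>t)$, and together with $\sum_{j\le T_{1}}Q_{j}\le T_{1}\max_{j}Q_{j}$ one obtains $s^{*}(\widetilde Q)\le s^{*}\Erw T_{1}<\infty$. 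Your appeal to Section~\ref{sec:tail lemma} is also off target: Lemmata~\ref{tail lemma} and~\ref{tail lemma 2} assume $\fm<0$, which is undefined here, Lemma~\ref{tail lemma 3} treats $Q(\gamma)$ with $\gamma<1$ rather than $\sum_{j\le T_{1}}Q_{j}$, and all three presuppose $\Erw\sigma<\infty$---so had $T_{1}$ really had infinite mean, nothing there would rescue you.
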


The two theorems are proved in Section \ref{sec:main11 and main12} after some preparatory work in Sections \ref{M<=gamma<1} and \ref{sec:tail lemma}.

\begin{Rem}\label{rem:main12}\rm
It is worthwhile to point out that the assumptions of the previous theorem impose some constraint on the tails of $\log_{+}M$. Namely, given these assumptions, the negative divergence of the random walk $S_{n}:=\log\Pi_{n}$, $n\ge 0$, that is $S_{n}\to-\infty$ a.s., entails
\begin{align*}
I_{M}\ =\ \int_{(1,\,\infty)}J_{-}(x)\ \Prob(\log M\in {\rm d}x)\ <\ \infty
\end{align*}
by Erickson's theorem \cite[Theorem 2]{Erickson:73}. But this in combination with $I_{Q}=\infty$ and $s^{*}<\infty$ further implies by stochastic comparison that
$$ r_{*}\ :=\ \liminf_{t\to\infty}t\,\Prob(\log M>t)\ =\ 0. $$
Indeed, if the latter failed to hold, i.e. $r_{*}>0$, then
$$ \Prob(\log M>t)\ \ge\ \frac{r_{*}}{2t}\ \ge\ \frac{r_{*}}{4s^{*}}\,\Prob(\log Q>t) $$
for all sufficiently large $t$, say $t\ge t_{0}$, which in turn would entail the contradiction
$$ I_{M}-J_-(0+)\ \ge\ \frac{r_{*}}{4s^{*}}\int_{(t_{0},\infty)}J_{-}'(x)\ \Prob(\log Q>x)\ {\rm d}x\ =\ \infty. $$
\end{Rem}

\section{The cases $M\le\gamma$ and $M\geq \gamma$: Two comparison lemmata and Kellerer's result}\label{M<=gamma<1}

This section collects some useful results for the cases when $M\le\gamma$ or $M\geq \gamma$ a.s.\ for a constant $\gamma\in (0,1)$, in particular Kellerer's unpublished recurrence result \cite{Kellerer:92} for this situation, see Proposition \ref{Kellerer's result} below. Whenever given iid nonnegative $Q_{1},Q_{2},\ldots$ with generic copy $Q$, let $(X_{n}(\gamma))_{n\ge 0}$ be defined by
$$ X_{n}(\gamma)\ =\ \gamma X_{n-1}(\gamma)+Q_{n},\quad n\ge 1,$$ where $X_{0}(\gamma)$ is independent of $(Q_{n})_{n\ge 1}$.
We start with two comparison lemmata which treat two RDE with identical $M\le\gamma$ but different $Q$.

\begin{Lemma}\label{comparison lemma}
Let $(M_{n},Q_{n},Q_{n}')_{n\ge 1}$ be a sequence of iid random vectors with nonnegative components and generic copy $(M,Q,Q')$ such that $M\le\gamma$ a.s. for some $\gamma\in (0,1)$ and
\begin{equation}\label{eq:tail condition Q'}
\Prob(Q'>t)\ \ge\ \Prob(Q>t)
\end{equation}
for some $t_{0}\ge 0$ and all $t\ge t_{0}$. Define
$$ X_{n}\,:=\,M_{n}X_{n-1}+Q_{n}\quad\text{and}\quad X_{n}'\,:\,=M_{n}X_{n-1}'+Q_{n}' $$
for $n\ge 1$, where $X_{0}^\prime$ is independent of $X_{0}$ and $(M_{k}, Q_{k})_{k\ge 1}$. Then
\begin{align*}
(X_{n})_{n\ge 0}\text{ transient}\quad\Longrightarrow\quad (X_{n}')_{n\ge 0}\text{ transient}\\
\shortintertext{or, equivalently,}
(X_{n}')_{n\ge 0}\text{ recurrent}\quad\Longrightarrow\quad (X_{n})_{n\ge 0}\text{ recurrent}
\end{align*}
\end{Lemma}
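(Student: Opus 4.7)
The two implications are contrapositives, so I focus on showing that \emph{transience of $(X_n)_{n \ge 0}$ implies transience of $(X_n')_{n \ge 0}$}. The strategy is to couple the two chains on a common probability space and to compare them pathwise, and then to invoke Proposition \ref{transient}.

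First, from \eqref{eq:tail condition Q'} I verify that $Q' + t_0$ stochastically dominates $Q$, i.e., $\Prob(Q' + t_0 > t) \ge \Prob(Q > t)$ for all $t \ge 0$. This is trivial for $t \le t_0$ (the left side equals $1$); for $t_0 < t \le 2t_0$ one has $\Prob(Q'+t_0>t) \ge \Prob(Q'>t_0) \ge \Prob(Q>t_0) \ge \Prob(Q>t)$; and for $t > 2t_0$ one has $\Prob(Q'+t_0>t) = \Prob(Q'>t-t_0) \ge \Prob(Q>t-t_0) \ge \Prob(Q>t)$. Since the recurrence/transience of each chain depends only on the respective marginal joint law $(M_n,Q_n) \sim \mu$ or $(M_n,Q_n') \sim \mu'$, I may (by Strassen's theorem, passing to a suitably enlarged probability space) assume the $(Q_n,Q_n')$ are coupled so that $Q_n' \ge Q_n - t_0$ a.s.\ for every $n \ge 1$ while these two marginal joints are preserved.

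Next, take $X_0 = X_0' = 0$, which is harmless in view of the dichotomy in Lemma \ref{lem:1} together with Proposition \ref{transient}: transience need only be verified from a single starting point. Setting $D_n := X_n - X_n'$, so $D_0 = 0$, the recursion $D_n = M_n D_{n-1} + (Q_n - Q_n')$ together with $M_n \le \gamma$ and $Q_n - Q_n' \le t_0$ yields $D_n \le \gamma D_{n-1}^+ + t_0$ (distinguishing the cases $D_{n-1} \ge 0$ and $D_{n-1} < 0$). A straightforward induction then gives $D_n^+ \le t_0/(1-\gamma)$ for all $n$, whence $X_n' \ge X_n - t_0/(1-\gamma)$ a.s.

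The conclusion is now immediate: if $(X_n)$ is transient, then $X_n \to \infty$ a.s.\ by Proposition \ref{transient}(b), so $X_n' \to \infty$ a.s.\ as well, and Proposition \ref{transient} applied to $(X_n')$ yields its transience. The only delicate point is the coupling in the first step, namely arranging $Q_n' \ge Q_n - t_0$ a.s.\ while preserving the two joint marginals on which the chain dynamics depend; once this is secured, the contraction estimate and the conclusion are routine consequences of $M_n \le \gamma < 1$.
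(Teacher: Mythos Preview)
Your proof is correct and follows essentially the same approach as the paper: re-couple so that $Q_n' \ge Q_n - t_0$ a.s., then iterate using $M_n \le \gamma$ to obtain the uniform lower bound $X_n' \ge X_n - t_0/(1-\gamma)$ a.s., from which transience of $(X_n)$ forces transience of $(X_n')$. Your use of $D_n^+$ together with a sign case-split is a minor stylistic variant of the paper's direct iteration $X_n'-X_n \ge \Pi_n(X_0'-X_0) - t_0\sum_{k=0}^{n-1}\gamma^k$, and your explicit flagging of the coupling step as the only delicate point is apt (the paper handles it with the same brevity).
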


\begin{proof}
The tail condition \eqref{eq:tail condition Q'} ensures that we may choose a coupling $(Q,Q')$ such that $Q'\ge Q-t_{0}$ a.s. Then, with $(Q_{n},Q_{n}')_{n\ge 1}$ being iid copies of $(Q,Q')$, it follows that
\begin{align*}
X_{n}'-X_{n}\ &=\ M_{n}(X_{n-1}'-X_{n-1})\ +\ Q_{n}'-Q_{n}\\
&\ge\ M_{n}(X_{n-1}'-X_{n-1})\ -\ t_{0}\\
\ldots\ &\ge\ \left(\prod_{k=1}^{n}M_{k}\right)(X_{0}'-X_{0})\ -\ t_{0}\sum_{k=0}^{n-1}\gamma^{k}\quad\text{a.s.}
\shortintertext{and thereby}
&\liminf_{n\to\infty}\,(X_{n}'-X_{n})\ \ge\ -\frac{t_{0}}{1-\gamma}\quad\text{a.s.}
\end{align*}
which obviously proves the asserted implication.\qed
\end{proof}

\begin{Lemma}\label{comparison lemma 2}
Replace condition \eqref{eq:tail condition Q'} in Lemma \ref{comparison lemma} with
\begin{equation}
Q'\ =\ \1_{\{Q>\beta\}}Q
\end{equation}
for some $\beta>0$, thus $\Prob(Q'=0)=\Prob(Q\le\beta)$. Then
$$ (X_{n}')_{n\ge 0}\text{ recurrent}\quad\Longleftrightarrow\quad (X_{n})_{n\ge 0}\text{ recurrent}. $$
\end{Lemma}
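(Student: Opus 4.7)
The plan is to prove both directions by coupling $(X_n)$ and $(X_n')$ through the common driving sequence $(M_n,Q_n)_{n\ge 1}$, using the fact that the specific construction $Q_n'=\1_{\{Q_n>\beta\}}Q_n$ yields the two-sided pointwise sandwich $Q_n-\beta\le Q_n'\le Q_n$ a.s. This is exactly what is needed to run both Lemma \ref{comparison lemma} and a trivial domination argument.

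For the implication $(X_n')$ recurrent $\Rightarrow$ $(X_n)$ recurrent, the key observation is that for every $t\ge\beta$ one has $\{Q'>t\}=\{Q>t\}$: on $\{Q\le\beta\}$, $Q'=0\le\beta\le t$, while on $\{Q>\beta\}$, $Q'=Q$. Hence $\Prob(Q'>t)=\Prob(Q>t)$ for all $t\ge\beta$, so the hypothesis \eqref{eq:tail condition Q'} of Lemma \ref{comparison lemma} is met with $t_0=\beta$, and applying that lemma directly to $(M_n,Q_n,Q_n')_{n\ge 1}$ yields the claim.

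For the reverse direction $(X_n)$ recurrent $\Rightarrow$ $(X_n')$ recurrent, I would use the same coupling and choose identical starting points, say $X_0'=X_0=0$; this is legitimate since the recurrence dichotomy for locally contractive chains does not depend on the initial value, by Proposition \ref{recur}. Since $Q_n'\le Q_n$ a.s., a one-line induction gives $0\le X_n'\le X_n$ for every $n$. If $(X_n)$ is recurrent, then $\liminf_{n\to\infty}X_n<\infty$ a.s.\ by Proposition \ref{recur}(c), hence $\liminf_{n\to\infty}X_n'<\infty$ a.s.\ as well. Since $M_n\le\gamma<1$, the chain $(X_n')_{n\ge 0}$ is itself (locally) contractive, so a second appeal to Proposition \ref{recur} turns this into recurrence of $(X_n')$.

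There is no genuine obstacle: both implications are essentially one-line coupling arguments and one of them has already been packaged as Lemma \ref{comparison lemma}. The only conceptual point to verify carefully is the tail equality $\Prob(Q'>t)=\Prob(Q>t)$ for $t\ge\beta$, which is what makes Lemma \ref{comparison lemma} directly applicable; everything else reduces to the pointwise bound $0\le X_n'\le X_n$ together with the starting-point-free characterization of recurrence from Proposition \ref{recur}.
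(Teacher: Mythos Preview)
Your proof is correct, but it takes a somewhat different route from the paper's. The paper gives a single two-sided bound that handles both implications at once: since $Q_n-Q_n'=\1_{\{Q_n\le\beta\}}Q_n\in[0,\beta]$ a.s., one has
\[
|X_n'-X_n|\ \le\ M_n|X_{n-1}'-X_{n-1}|+\beta\ \le\ \gamma^{n}|X_0'-X_0|+\frac{\beta}{1-\gamma}
\]
for all $n$, so the two chains stay within a fixed finite distance and recurrence of one is immediately equivalent to recurrence of the other. You instead treat the two directions asymmetrically: for $(X_n')$ recurrent $\Rightarrow$ $(X_n)$ recurrent you observe the tail equality $\Prob(Q'>t)=\Prob(Q>t)$ for $t\ge\beta$ and feed it into Lemma~\ref{comparison lemma}, while for the converse you use the pointwise domination $0\le X_n'\le X_n$ (with $X_0=X_0'=0$) together with Proposition~\ref{recur}(c). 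Both arguments are valid; the paper's is shorter and more symmetric, whereas yours has the advantage of explicitly reusing the machinery already in place (Lemma~\ref{comparison lemma} and Proposition~\ref{recur}) rather than redoing the telescoping estimate.
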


\begin{proof}
Here it suffices to point out that
\begin{align*}
|X_{n}'-X_{n}|\ &=\ |M_{n}(X_{n-1}'-X_{n-1})\ +\ Q_{n}'-Q_{n}|\\
&\le\ M_{n}|X_{n-1}'-X_{n-1}|\ +\ \1_{\{Q_{n}\le\beta\}}Q_{n}\\
\ldots\ &\le\ \left(\prod_{k=1}^{n}M_{k}\right)(X_{0}'-X_{0})\ +\ \beta\sum_{k=0}^{n-1}\gamma^{k}\\
&\le\ \gamma^{n}(X_{0}'-X_{0})\ +\ \frac{\beta}{1-\gamma}\quad\text{a.s.}
\end{align*}
for all $n\ge 1$.\qed
\end{proof}

The announced result by Kellerer including its proof (with some minor modifications), taken from his unpublished Technical Report \cite[Theorem ~3.1]{Kellerer:92}, is given next.

\begin{Prop}\label{Kellerer's result}
Let $0<\gamma<1$. Then the following assertions hold:
\begin{description}
\item[(a)] $(X_{n})_{n\ge 0}$ is transient if $M\ge\gamma$ a.s. and
\begin{equation}\label{lower tail condition}
s_{*}\ =\ \liminf_{t\to\infty}t\,\Prob(\log Q>t)\ >\ \log(1/\gamma).
\end{equation}
\item[(b)] $(X_{n})_{n\ge 0}$ is null recurrent if $M\le\gamma$ a.s. and
\begin{equation}\label{upper tail condition}
s^{*}\ =\ \limsup_{t\to\infty}t\,\Prob(\log Q>t)\ <\ \log(1/\gamma).
\end{equation}
\end{description}
\end{Prop}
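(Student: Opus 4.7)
\smallskip\noindent\textbf{Proof plan.} My strategy for both parts is to invoke Propositions~\ref{transient}(d) and~\ref{recur}(d), reducing the problem to estimating $\sum_n\Prob(X_n^0\in K)$ for the compact set $K=[0,r]$. Writing $F(t):=\Prob(\log Q\le t)$, $\bar F:=1-F$, and $c:=\log(1/\gamma)>0$, I will bound $\Prob(X_n^0\le r)$ from above (for (a)) and from below (for (b)) by a product of the form $\prod_{j=0}^{n-1}F(u+jc)$, then extract the summability behavior from the hypothesized control $\bar F(t)\sim s/t$ on the tail of $\log Q$.

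\smallskip\noindent\textbf{Part (a).} Starting from $X_0=0$ and using $M_n\ge\gamma$ a.s., a simple induction gives $X_n\ge\gamma^{n-k}Q_k$ for all $1\le k\le n$, so
$$\Prob(X_n\le r)\ \le\ \Prob\bigl(\max_{1\le k\le n}\gamma^{n-k}Q_k\le r\bigr)\ =\ \prod_{j=0}^{n-1}F(\log r+jc).$$
Given $s_*>c$, pick $\epsilon\in(0,s_*-c)$ so that $\bar F(t)\ge(s_*-\epsilon)/t$ for all sufficiently large $t$. Combining $\log(1-x)\le-x$ with the asymptotic $\sum_{j=j_0}^{n-1}1/(\log r+jc)\sim c^{-1}\log n$, one obtains $\prod_{j=0}^{n-1}F(\log r+jc)\le C\,n^{-(s_*-\epsilon)/c}$ for some $C>0$. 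Since $(s_*-\epsilon)/c>1$, this forces $\sum_n\Prob(X_n\le r)<\infty$, and transience follows from Proposition~\ref{transient}(d).

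\smallskip\noindent\textbf{Part (b).} Coupling with the same $(Q_n)_{n\ge 1}$ and setting $Y_n:=\gamma Y_{n-1}+Q_n$ with $Y_0=0$, an induction using $M_n\le\gamma$ gives $X_n\le Y_n$ a.s., whence $\Prob(X_n\le y)\ge\Prob(Y_n\le y)$, with explicitly $Y_n=\sum_{k=1}^n\gamma^{n-k}Q_k$. For a usable lower bound I would introduce
$$A_n\ :=\ \bigcap_{k=1}^n\bigl\{Q_k\le c_{n-k}\bigr\},\qquad c_j\ :=\ (y/C_0)\,\gamma^{-j}(j+1)^{-2},\qquad C_0:=\sum_{j\ge 0}(j+1)^{-2},$$
so that on $A_n$ one has $Y_n\le\sum_{j=0}^{n-1}\gamma^j c_j\le y$ and, by independence,
$$\Prob(Y_n\le y)\ \ge\ \Prob(A_n)\ =\ \prod_{j=0}^{n-1}F\bigl(\log(y/C_0)+jc-2\log(j+1)\bigr).$$
Given $s^*<c$, I would choose $\epsilon,\delta,\eta>0$ small enough that $\alpha:=(1+\delta)(s^*+\epsilon)/(c(1-\eta))<1$. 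Since $\log(y/C_0)+jc-2\log(j+1)\ge jc(1-\eta)$ for all large $j$, the bound $\bar F(t)\le(s^*+\epsilon)/t$ for large $t$ combined with $\log(1-x)\ge-(1+\delta)x$ for small $x$ delivers $\Prob(A_n)\ge C'\,n^{-\alpha}$ for some $C'>0$. Hence $\sum_n\Prob(Y_n\le y)=\infty$, Proposition~\ref{recur}(d) yields recurrence of $(X_n)_{n\ge 0}$, and null recurrence follows because the standing assumption \eqref{30} ($I_Q=\infty$) excludes positive recurrence.

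\smallskip\noindent\textbf{Main obstacle.} The delicate point is the product estimate in (b): the exponent $\alpha$ must be kept strictly below $1$, leaving essentially no slack beyond the bare hypothesis $s^*/c<1$. This dictates the specific form $c_j\propto\gamma^{-j}(j+1)^{-2}$, which keeps $\sum_j\gamma^j c_j$ finite while displacing $\log c_j$ from the ideal value $jc$ by only $O(\log j)$; it also forces use of the sharp estimate $-\log(1-x)\le x(1+o(1))$ rather than the cruder $-\log(1-x)\le 2x$, which would only yield recurrence under the stronger hypothesis $s^*<c/2$.
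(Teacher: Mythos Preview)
Your argument is correct. Part~(a) is essentially the paper's proof: both reduce to the product $\prod_{j}F(\log r+jc)$ and extract summability from $\bar F(t)\gtrsim s_*/t$; the paper works with the backward iteration $\wh X_n(\gamma)$ after first reducing to $M=\gamma$, while you use $M\ge\gamma$ directly on the forward chain, but the resulting product and the exponent estimate are identical.

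Part~(b) takes a genuinely different route. The paper first invokes the truncation Lemma~\ref{comparison lemma 2} to replace $Q$ by $Q'=Q\1_{\{Q>\beta\}}$, thereby forcing $\delta:=\Prob(Q'=0)\ge\gamma^{\eps}$; it then uses the crude bound $\wh X_n(\gamma)\le n\max_{k}\gamma^{k-1}Q_k$ and estimates $\Prob(\max\le 1/n)$, the factor $\delta^{m_n}$ absorbing the first $m_n\asymp\log n$ terms where the argument of $F$ would otherwise be negative. Your allocation $c_j=(y/C_0)\gamma^{-j}(j+1)^{-2}$ achieves the same effect without any preliminary truncation: the polynomial weight $(j+1)^{-2}$ keeps $\sum_j\gamma^jc_j$ finite while shifting $\log c_j$ from $jc$ by only $O(\log j)$, which is negligible in the exponent computation. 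What your approach buys is self-containment (no appeal to Lemma~\ref{comparison lemma 2}); what the paper's buys is a cleaner separation between the small-$j$ and large-$j$ regimes. One small point you leave implicit: you must take $y$ large enough that $\min_{j\ge 0}c_j$ exceeds the essential infimum of $Q$, so that every factor $F(\log c_j)$ is strictly positive; since $\min_j\gamma^{-j}(j+1)^{-2}>0$, this is always possible.
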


\begin{proof}
It is enough to consider (in both parts) the case when $M=\gamma$ a.s. and thus the Markov chain $(X_{n}(\gamma))_{n\ge 0}$ as defined above. We may further assume that $X_{0}(\gamma)=0$ and put $\theta:=\log(1/\gamma)$.

\vspace{.1cm}
\emph{Transience.} It suffices to show that $\sum_{n\ge 1}\Prob(\wh{X}_{n}(\gamma)\le e^{t})<\infty$ for all $t\ge 0$. Fixing $t$ and any $\eps>0$ with $(1+\eps)\theta<s_{*}$, pick $m\in\N$ so large that
$$ \inf_{k\ge m+1}k\theta\,\Prob(\log Q>t+k\theta)\ \ge \ (1+\eps)\theta. $$
Then we infer for all $n>m$
\begin{align*}
\Prob(\wh{X}_{n}(\gamma)\le e^{t})\ &=\ \Prob\left(\sum_{k=1}^{n}\gamma^{k-1}Q_{k}\le e^{t}\right)\\
&\le\ \Prob\big(\log Q_{k}\le t+(k-1)\theta,\,1\le k\le n\big)\\
&\le\ \prod_{k=m+1}^{n}\big(1-\Prob(\log Q>t+k\theta)\big)\\
&\le\ \prod_{k=m+1}^{n}\left(1-\frac{1+\eps}{k}\right)\\
&\le\ \prod_{k=m+1}^{n}\left(1-\frac{1}{k}\right)^{1+\eps}\ =\ \left(\frac{m}{n}\right)^{1+\eps}
\end{align*}
where $(1-x)^{1+\eps}\ge 1-(1+\eps)x$ for all $x\in [0,1]$ has been utilized for the last inequality. Consequently, $\sum_{n\ge 1}\Prob(\wh{X}_{n}(\gamma)\le e^{t})<\infty$, and the transience of $(X_{n}(\gamma))_{n\ge 0}$ follows by Proposition \ref{transient}.

\vspace{.2cm}
\emph{Null recurrence.}
By Lemma \ref{comparison lemma 2}, we may assume w.l.o.g. that, for some sufficiently small $\varepsilon>0$, $\delta\,:=\,\Prob(Q=0)\,\ge\,\gamma^{\eps}$ and
\begin{align*}
&\sup_{t\ge 1}t\,\Prob(\log Q>t)\ \le\ (1-\eps)\theta.
\end{align*}
Put also $m_{n}:=\theta^{-1}(m+\log n)$ for integer $m\ge 1$ so large that $$g(x,n):=(x-1)\theta-\log n\ge 1\vee (1-\varepsilon)\theta$$ for all $x\in (m_{n},\infty)$. Note that $\delta^{m_{n}}\ge (e^mn)^{-\eps}$. For all $n\ge 1$ so large that $g(n,n)>\theta$, we then infer
\begin{align*}
\Prob(\wh{X}_{n}(\gamma)\le 1)\ &\ge\ \Prob\left(\max_{1\le k\le n}\gamma^{k-1}Q_{k}\le\frac{1}{n}\right)\\
&\ge\ \Prob(Q=0)^{m_{n}}\prod_{m_{n}+1\le k\le n}\Prob(\log Q\le g(k,n))\\
&\ge\ \delta^{m_{n}}\prod_{m_{n}+1\le k\le n}\left(1-\frac{(1-\eps)\theta}{g(k,n)}\right)\\
&\ge\ (e^m n)^{-\eps}\prod_{k=m}^{n}\left(1-\frac{1-\eps}{k}\right)\\
&\ge\ (e^m n)^{-\eps}\prod_{k=2}^{n}\left(1-\frac{1}{k}\right)^{1-\eps}\ =\ \frac{e^{-m\varepsilon}}{n}.
\end{align*}
Here $(1-x)^{1-\eps}\le 1-(1-\eps)x$ for all $x\in [0,1]$ has been utilized for the last inequality. Hence, $\sum_{n\ge 1}\Prob(\wh{X}_{n}(\gamma)\le 1)=\infty$, giving the recurrence of $(X_{n}(\gamma))_{n\ge 0}$ by Proposition \ref{recur}.\qed
\end{proof}

Given a Markov chain $(Z_{n})_{n\ge 0}$, a sequence $(\sigma_{n})_{n\ge 0}$ is called a \emph{renewal stopping sequence} for this chain if the following conditions hold:
\begin{description}[(R2)]
\item[(R1)] $\sigma_{0}=0$ and the $\tau_{n}:=\sigma_{n}-\sigma_{n-1}$ are iid for $n\ge 1$.
\vspace{.1cm}
\item[(R2)] There exists a filtration $\cF=(\cF_{n})_{n\ge 0}$ such that $(Z_{n})_{n\ge 0}$ is Markov-adapted and each $\sigma_{n}$ is a stopping time with respect to $\cF$.
\end{description}

We define
$$ S_{n}\ :=\ \log\Pi_{n}\ =\ \sum_{k=1}^{n}\log M_{k} $$
for $n\ge 0$ and recall that, by our standing assumption, $(S_{n})_{n\ge 0}$ is a negative divergent random walk $(S_{n}\to-\infty$ a.s.). For $c\in\R$, let $(\sgn(c))_{n\ge 0}$ and $(\sln(c))_{n\ge 0}$ denote the possibly defective renewal sequences of ascending and descending ladder epochs associated with the random walk $(S_{n}+cn)_{n\ge 0}$, in particular
\begin{align*}
\sg(c)\ &=\ \sg_{1}(c)\ :=\ \inf\{n\ge 1:S_{n}+cn>0\}\ =\ \inf\{n\ge 1:\Pi_{n}>e^{-cn}\},\\
\sl(c)\ &=\ \sl_{1}(c)\ :=\ \inf\{n\ge 1:S_{n}+cn<0\}\ =\ \inf\{n\ge 1:\Pi_{n}<e^{-cn}\}.
\end{align*}
Plainly, these are renewal stopping sequences for $(X_{n})_{n\ge 0}$ whenever nondefective.

\begin{Lemma}\label{lem:bounding lemma}
Let $c\ge 0$ and $\gamma=e^{-c}$.
\begin{description}[(b)]
\item[(a)] If $c$ is such that $\sl(c)<\infty$ a.s., then, with $(\sigma_{n})_{n\ge 0}:=(\sln(c))_{n\ge 0}$,
$$ X_{\sigma_{n}}\ \le\ X_{\sigma_{n}}(\gamma)\quad\text{and}\quad\wh{X}_{\sigma_{n}}\ \le\ \wh{Y}_{n}\quad\text{a.s.} $$
for all $n\ge 0$, where $X_{0}=X_{0}(\gamma)=\wh{Y}_{0}$ and
$$ \wh{Y}_{n}\ :=\ \sum_{k=1}^{n}\gamma^{\sigma_{k-1}}Q_{k}^{*}\quad\text{with}\quad Q_{n}^{*}\ :=\ \sum_{k=1}^{\sigma_{n}-\sigma_{n-1}}\frac{\Pi_{\sigma_{n-1}+k-1}}{\Pi_{\sigma_{n-1}}}Q_{k} $$
for $n\ge 1$ denotes the sequence of backward iterations pertaining to the recursive Markov chain $Y_{n}=\gamma^{\sigma_{n}-\sigma_{n-1}}Y_{n-1}+Q_{n}^{*}$.
\vspace{.1cm}
\item[(b)] If $c$ is such that $\sg(c)<\infty$ a.s., then, with $(\sigma_{n})_{n\ge 0}:=(\sgn(c))_{n\ge 0}$,
$$ X_{\sigma_{n}}\ \ge\ X_{\sigma_{n}}(\gamma)\quad\text{and}\quad\wh{X}_{\sigma_{n}}\ \ge\ \wh{Y}_{n}\quad\text{a.s.} $$
for all $n\ge 0$, where $X_{0}=X_{0}(\gamma)=\wh{Y}_{0}$ and $\wh{Y}_{n}$ is defined as in (a) for the $\sigma_{n}$ given here.
\end{description}
\end{Lemma}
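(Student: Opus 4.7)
The plan rests on a single pathwise observation about the ladder-epoch sequences: iterating the definition shows that at each strict descending ladder time $\sigma_n = \sln(c)$ the value $S_{\sigma_n}+c\sigma_n$ equals $\min_{0\le k\le \sigma_n}(S_k+ck)$ --- because between consecutive ladder epochs the walk stays at or above the previous ladder value, while at each ladder time it strictly drops below --- and symmetrically with maxima in the ascending case. Exponentiating, case (a) yields
$$ \frac{\Pi_{\sigma_n}}{\Pi_k}\ \le\ \gamma^{\sigma_n-k}\ \text{ for } 0\le k\le \sigma_n,\qquad\Pi_{\sigma_{j-1}}\ \le\ \gamma^{\sigma_{j-1}}\ \text{ for } 1\le j\le n, $$
with both inequalities reversed in case (b). Everything else is bookkeeping on the closed-form expressions $X_n = \Pi_n X_0 + \sum_{k=1}^n (\Pi_n/\Pi_k) Q_k$ and $\wh{X}_n = \Pi_n X_0 + \sum_{k=1}^n \Pi_{k-1} Q_k$.

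For the forward inequality $X_{\sigma_n}\le X_{\sigma_n}(\gamma)$ in (a), I would evaluate the first formula at time $\sigma_n$ and bound each nonnegative summand by the first of the two displayed inequalities, the right-hand side collapsing to $\gamma^{\sigma_n}X_0+\sum_{k=1}^{\sigma_n}\gamma^{\sigma_n-k}Q_k = X_{\sigma_n}(\gamma)$. For the backward inequality, I would split the sum $\sum_{k=1}^{\sigma_n}\Pi_{k-1}Q_k$ into the $n$ chunks $k\in(\sigma_{j-1},\sigma_j]$, factor $\Pi_{\sigma_{j-1}}$ out of the $j$-th chunk and reindex by $i=k-\sigma_{j-1}$ to obtain $\Pi_{\sigma_{j-1}}Q_j^*$ (the $Q_k$ in the lemma's formula for $Q_n^*$ to be read as the restarted $Q_{\sigma_{n-1}+k}$); thus
$$ \wh{X}_{\sigma_n}\ =\ \Pi_{\sigma_n}X_0 + \sum_{j=1}^n\Pi_{\sigma_{j-1}}Q_j^*. $$
The second of the displayed inequalities then gives $\wh{X}_{\sigma_n}\le\gamma^{\sigma_n}X_0+\sum_{j=1}^n\gamma^{\sigma_{j-1}}Q_j^* = \wh{Y}_n$, the last equality being forced by the hypothesis $X_0=X_0(\gamma)=\wh{Y}_0=0$ (the right-most zero coming from the empty-sum convention in the definition of $\wh{Y}_n$). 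Part (b) is the mirror computation with all inequalities reversed.

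The only real obstacle is the initial step --- identifying $\sigma_n$ as the argmin (resp.\ argmax) of $(S_k+ck)$ on $[0,\sigma_n]$ --- which however reduces to a short induction once the renewal-sequence structure of $(\sigma_n)$ is unpacked; keeping indices straight when regrouping the backward sum into blocks is the only other point that calls for care. The iid nature of the ladder-epoch blocks plays no role in the pathwise inequalities themselves, and is needed only to guarantee that $(Y_n)_{n\ge 0}$ is a temporally homogeneous Markov chain, a property exploited elsewhere in the paper.
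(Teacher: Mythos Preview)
Your proof is correct and follows essentially the same route as the paper: both rest on the pathwise inequality $\Pi_{\sigma_n}/\Pi_k \le \gamma^{\sigma_n-k}$ (resp.\ $\ge$) coming from the ladder structure, the only cosmetic difference being that the paper establishes the forward bound $X_{\sigma_n}\le X_{\sigma_n}(\gamma)$ by induction on $n$ (one block at a time via the recursion for $X$), whereas you bound the closed-form sum in one shot. The paper's treatment of the backward sum is identical to yours, and it likewise tacitly works with $X_0=0$ and the index correction $Q_k\mapsto Q_{\sigma_{n-1}+k}$ that you flag.
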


Plainly, one can take $c\in (0,-\fm)$ in (a) and $c\in (-\fm,\infty)$ in (b) if $-\infty<\fm<0$.

\begin{proof}
(a) Suppose that the $\sln(c)$ are a.s. finite. To prove our claim for $X_{\sigma_{n}}$, we use induction over $n$. Since $\sigma_{0}=0$, we have $X_{\sigma_{0}}=X_{\sigma_{0}}(\gamma)$. For the inductive step suppose that $X_{\sigma_{n-1}}\le X_{\sigma_{n-1}}(\gamma)$ for some $n\ge 1$. Observe that, with $\tau_{n}=\sigma_{n}-\sigma_{n-1}$,
\begin{align}
\begin{split}\label{eq:crucial estimate}
M_{\sigma_{n-1}+k+1}\cdot\ldots\cdot M_{\sigma_{n}}\ &=\ \frac{\Pi_{\sigma_{n}}}{\Pi_{\sigma_{n-1}+k}}\\
&=\ e^{(S_{\sigma_{n}}+c\sigma_{n})-(S_{\sigma_{n-1}+k}+c(\sigma_{n-1}+k))-c(\tau_{n}-k)}\ \le\ \gamma^{\tau_{n}-k}
\end{split}
\end{align}
for all $0\le k\le\tau_{n}$. Using this and the inductive hypothesis, we obtain
\begin{align*}
X_{\sigma_{n}}\ &=\ \frac{\Pi_{\sigma_{n}}}{\Pi_{\sigma_{n-1}}}X_{\sigma_{n-1}}+\sum_{k=1}^{\tau_{n}}\frac{\Pi_{\sigma_{n}}}{\Pi_{\sigma_{n-1}+k}}\,Q_{\sigma_{n-1}+k}\\
&\le\ \gamma^{\tau_{n}}X_{\sigma_{n-1}}(\gamma)+\sum_{k=1}^{\tau_{n}}\gamma^{\tau_{n}-k}\,Q_{\sigma_{n-1}+k}\ =\ X_{\sigma_{n}}(\gamma)\quad\text{a.s.}
\end{align*}
as asserted. Regarding the backward iteration $\wh{X}_{\sigma_{n}}$, we find more directly that
\begin{align*}
\wh{X}_{\sigma_{n}}\ &=\ \sum_{k=1}^{n}\Pi_{\sigma_{k-1}}\sum_{j=1}^{\tau_{k}}\frac{\Pi_{\sigma_{k-1}+j-1}}{\Pi_{\sigma_{k-1}}}Q_{j}\\
&\le\ \sum_{k=1}^{n}\gamma^{\sigma_{k-1}}\sum_{j=1}^{\tau_{k}}\frac{\Pi_{\sigma_{k-1}+j-1}}{\Pi_{\sigma_{k-1}}}Q_{j}\ =\ \wh{Y}_{n}\quad\text{a.s.}
\end{align*}
for each $n\ge 1$.

\vspace{.1cm}
(b) If $c$ is such that the $\sgn(c)$ are a.s. finite, then \eqref{eq:crucial estimate} turns into
\begin{equation*}
M_{\sigma_{n-1}+k+1}\cdot\ldots\cdot M_{\sigma_{n}}\ \ge\ \gamma^{\tau_{n}-k}
\end{equation*}
for all $n\in\N$ and $0\le k\le\tau_{n}$. Now it is easily seen that the inductive argument in (a) remains valid when reversing inequality signs and the same holds true for $\wh{X}_{\sigma_{n}}$.\qed
\end{proof}

\section{Tail lemmata}\label{sec:tail lemma}

In order to prove our results, we need to verify that the tail condition \eqref{tail2} is preserved under stopping times with finite mean. To be more precise, let $\sigma$ be any such stopping time for $(M_{k},Q_{k})_{k\ge 1}$ and consider
$$ \wh{X}_{\sigma}\ =\ \sum_{k=1}^{\sigma}\Pi_{k-1}Q_{k}. $$
Obviously,
\begin{equation}\label{Qsigma* bounds}
\max_{1\le k\le \sigma}\,\Pi_{k-1}Q_{k}\ \le\ \wh{X}_{\sigma}\ \le\ \sigma\,\max_{1\le k\le \sigma}\,\Pi_{k-1}Q_{k}.
\end{equation}

\begin{Lemma}\label{tail lemma}
Assuming \eqref{trivial}, \eqref{trivial2} and $\fm<0$, condition \eqref{tail2} entails
\begin{equation*}
\lim_{t\to\infty}t\,\Prob(\log \wh{X}_{\sigma}>t)\ =\ s\,\Erw\sigma,
\end{equation*}
where the right-hand side equals $0$ if $s=0$, and $\infty$ if $s=\infty$.
\end{Lemma}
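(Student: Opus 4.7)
The plan is to pass from $\wh{X}_\sigma$ to $M_\sigma:=\max_{1\le k\le \sigma}(S_{k-1}+\log Q_k)$ (with $S_k:=\log\Pi_k$) via the sandwich inequalities \eqref{Qsigma* bounds}, which upon taking logarithms yield $M_\sigma\le\log\wh{X}_\sigma\le\log\sigma+M_\sigma$. Since $\Erw\sigma<\infty$, Markov's inequality gives $\Prob(\log\sigma>\eps t)=\Prob(\sigma>e^{\eps t})\le e^{-\eps t}\Erw\sigma=o(1/t)$ for every $\eps>0$, so $t\Prob(\log\wh{X}_\sigma>t)$ and $t\Prob(M_\sigma>t)$ share the same asymptotics (let $\eps\downarrow 0$). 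It therefore suffices to prove $\lim_{t\to\infty}t\Prob(M_\sigma>t)=s\Erw\sigma$, which I first verify for $0<s<\infty$ and then extend to the boundary cases $s\in\{0,\infty\}$.

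For the upper bound, a union bound combined with conditioning on $\cF_{k-1}:=\sigma((M_j,Q_j)_{j<k})$ --- which contains $\{\sigma\ge k\}$ and $S_{k-1}$, while $Q_k$ is independent of it --- produces
\begin{equation*}
\Prob(M_\sigma>t)\;\le\;\sum_{k\ge 1}\Prob(\sigma\ge k,\,S_{k-1}+\log Q_k>t)\;=\;\Erw\sum_{k=1}^\sigma G(t-S_{k-1}),
\end{equation*}
with $G(u):=\Prob(\log Q>u)$. Under \eqref{30}, $S_{k-1}\le S^*:=\sup_n S_n<\infty$ a.s., and the hypothesis $uG(u)\to s$ gives $tG(t-S_{k-1})\to s$ a.s.\ for each $k$, so $\sum_{k=1}^\sigma tG(t-S_{k-1})\to s\sigma$ pointwise. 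With $C:=\sup_{u>0}uG(u)<\infty$ (finite for $s<\infty$), one has the a.s.\ dominating bound $\sum_{k=1}^\sigma tG(t-S_{k-1})\le 2C\sigma$ whenever $t\ge 2S^*$, and a generalized dominated convergence argument (split on $\{S^*\le L\}$, apply DCT with the dominator $2C\sigma$, then send $L\to\infty$) yields $\limsup_{t\to\infty}t\Prob(M_\sigma>t)\le s\Erw\sigma$.

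For the matching lower bound, set $N_t:=\sum_{k=1}^\sigma\1\{S_{k-1}+\log Q_k>t\}$, so $\Prob(M_\sigma>t)=\Prob(N_t\ge 1)\ge\Erw N_t-\tfrac12\Erw[N_t(N_t-1)]$ by Bonferroni. Truncate to $\sigma_K:=\sigma\wedge K$, denote by $N_t^{(K)}$ the corresponding count, and observe that the first-moment argument above applied to $\sigma_K$ gives $t\Erw N_t^{(K)}\to s\Erw\sigma_K$. For the cross-terms I condition on $\cG:=\sigma((M_j)_{j\ge 1})$: given $\cG$, the $Q_k$ become iid, so for $k<l$ the events $\{\sigma_K\ge l,\,S_{k-1}+\log Q_k>t\}$ (involving only $Q_1,\ldots,Q_{l-1}$) and $\{S_{l-1}+\log Q_l>t\}$ (involving only $Q_l$) are conditionally independent, which yields
\begin{equation*}
\Prob(\sigma_K\ge l,\,S_{k-1}+\log Q_k>t,\,S_{l-1}+\log Q_l>t\mid\cG)\;\le\;G(t-S_{k-1})\,G(t-S_{l-1})\;\le\;G(t-S^*)^2.
\end{equation*}
Summing over $k<l\le K$ and taking expectations gives $\Erw[N_t^{(K)}(N_t^{(K)}-1)]\le K^2\,\Erw[G(t-S^*)^2]=O(1/t^2)$, using $tG(t-S^*)\to s$ a.s.\ and a uniform dominator for large $t$. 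Hence $t\Erw[N_t^{(K)}(N_t^{(K)}-1)]\to 0$ with $K$ fixed, Bonferroni yields $\liminf_{t\to\infty}t\Prob(M_\sigma>t)\ge s\Erw\sigma_K$, and monotone convergence as $K\to\infty$ closes the equality.

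The boundary cases fit into the same framework: $s=0$ is covered by the upper bound alone, and $s=\infty$ by coupling $Q$ stochastically from below with a $Q'$ satisfying $\Prob(\log Q'>t)\sim s'/t$ for any finite $s'$ (as in Lemma \ref{comparison lemma}), applying the finite-$s'$ case to $(M_k,Q_k')$, and letting $s'\uparrow\infty$. The main obstacle is that $\sigma$ is a stopping time for the \emph{joint} sequence $(M_k,Q_k)_{k\ge 1}$, which blocks any direct factorization of the pairwise probabilities; conditioning on $\cG$ is the key device that restores the needed conditional independence between the $Q_k$-dependent pieces, while the truncation $\sigma_K$ is what absorbs the factor $K^2$ unavoidable in the resulting second-moment bound.
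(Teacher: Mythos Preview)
Your reduction to $M_\sigma=\max_{1\le k\le\sigma}(S_{k-1}+\log Q_k)$ via \eqref{Qsigma* bounds} matches the paper's, but both halves of the remaining argument contain genuine gaps.

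\textbf{Upper bound.} The ``generalized DCT'' does not deliver the inequality you need. Splitting on $\{S^*\le L\}$ and applying DCT there yields only $\lim_{t\to\infty}\Erw\big[\1_{\{S^*\le L\}}\sum_{k=1}^{\sigma} tG(t-S_{k-1})\big]=s\,\Erw[\sigma\1_{\{S^*\le L\}}]$, which after $L\to\infty$ produces a \emph{lower} bound for the liminf, not the required upper bound. What is missing is a uniform-in-$t$ control of the piece on $\{S^*>L\}$, and there is no integrable dominator available under the stated hypotheses: the natural candidate $tG(t-S_{k-1})\le 2C+2S_{k-1}^{+}$ leads to $\Erw\sum_{k=1}^{\sigma}S_{k-1}^{+}$, whose finiteness would essentially require $\Erw\sigma^{2}<\infty$. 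The paper avoids this by first decoupling $S$ from $\eta_k=\log Q_k$: it uses $\Prob(M_\sigma>t)\le\Prob(\max_{k\le\sigma}S_{k}>\eps t)+\Prob(\max_{k\le\sigma}\eta_k>(1-\eps)t)$, bounds $\max_{k\le\sigma}S_{k}\le\sum_{k=1}^{\sigma}\log_{+}M_k$ and applies Wald's identity to get $t\Prob(\max_{k\le\sigma}S_k>\eps t)\to 0$, while the $\eta$-part is handled by a union bound and Wald.

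\textbf{Lower bound.} Conditioning on $\cG=\sigma((M_j)_{j\ge 1})$ does \emph{not} make the $Q_k$ iid with tail $G$ unless $M$ and $Q$ happen to be independent; it only makes them conditionally independent with conditional tail $\Prob(\log Q_k>u\mid M_k)$, which depends on $M_k$. Your inequality $\Prob(\cdots\mid\cG)\le G(t-S_{k-1})G(t-S_{l-1})$ therefore fails in general, and since $M_k$ enters $S_{l-1}$ for $k<l$, one cannot simply integrate out $M_k$ afterwards to recover $G$. The Bonferroni scheme can be repaired (for instance by splitting each event into $\{S_{j-1}>\eps t\}\cup\{\eta_j>(1-\eps)t\}$ so that the $Q$-parts become genuinely independent), but as written the key step is wrong. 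The paper again bypasses the dependence by introducing $\tau(c)=\inf\{n:S_n<-c\}$, bounding $M_\sigma\ge\max_{1\le k\le\sigma\wedge\tau(c)}\eta_k-c$, computing the distribution of this maximum of iid variables exactly via a first-success decomposition, and sending $c\to\infty$.
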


\begin{proof}
It suffices to prove
\begin{equation*}
\lim_{t\to\infty}t\,\Prob\left(\log\max_{1\le k\le \sigma}\,\Pi_{k-1}Q_{k}>t\right)\ =\ s\,\Erw \sigma
\end{equation*}
because \eqref{Qsigma* bounds} in combination with $\Erw\sigma<\infty$ entails
\begin{align*}
\Prob&\left(\log\max_{1\le k\le \sigma}\,\Pi_{k-1}Q_{k}>t\right)\ \le\ \Prob(\log \wh{X}_{\sigma}>t)\\
&\le\ \Prob(\log\sigma>\eps t)\ +\ \Prob\left(\log\max_{1\le k\le \sigma}\,\Pi_{k-1}Q_{k}>(1-\eps)t\right)\\
&=\ o(t^{-1})\ +\ \Prob\left(\log\max_{1\le k\le \sigma}\,\Pi_{k-1}Q_{k}>(1-\eps)t\right)
\end{align*}
for all $\eps>0$.

\vspace{.1cm}
(a) We first prove that
\begin{equation}\label{eq:upper bound}
\limsup_{t\to\infty}\,t\,\Prob\left(\log\max_{1\le k\le \sigma}\,\Pi_{k-1}Q_{k}>t\right)\ \le\ s\,\Erw\sigma
\end{equation}
which is nontrivial only when assuming $s\in [0,\infty)$. Put $\eta_{n}:=\log Q_{n}$ for $n\in\N$.
For any $\eps\in (0,1)$, we then have
\begin{align*}
\Prob&\left(\log\max_{1\le k\le \sigma}\Pi_{k-1}Q_{k}>t\right)\ =\ \Prob\left(\max_{1\le k\le \sigma}\,(S_{k-1}+\eta_{k})>t\right)\\
&\le\ \Prob\left(\max_{0\le k\le \sigma}\,S_{k}>\eps t\right)\ +\ \Prob\left(\max_{1\le k\le \sigma}\,\eta_{k}>(1-\eps)t\right)\\
&=\ I_{1}(t)\ +\ I_{2}(t).
\end{align*}
Regarding $I_{1}(t)$, notice that
$$ \max_{0\le k\le\sigma}S_{k}\ \le\ \sum_{k=1}^{\sigma}\log_{+}M_{k}. $$
Since $\fm\in [-\infty,0)$ entails $\Erw\log_{+}M<\infty$ and thus, by Wald's identity,
$$ \Erw\left(\max_{0\le k\le\sigma}S_{k}\right)\ \le\ \Erw\left(\sum_{k=1}^{\sigma}\log_{+}M_{k}\right)\ =\ \Erw\sigma\,\Erw\log_{+}M\ <\ \infty. $$
As a consequence,
$$ \lim_{t\to\infty}t\,I_{1}(t)\ =\ 0. $$

Turning to $I_2(t)$, we obtain
\begin{align*}
t\,I_{2}(t)\ &\le\ t\,\Erw\sum_{k=1}^{\sigma}\1_{\{\eta_{k}>(1-\eps)t\}}\\
&=\ t\,\Erw\sum_{k\ge 1}\1_{\{\eta_{k}>(1-\eps)t,\,\sigma\ge k\}}\\
&=\ t\,\Prob(\eta_{1}>(1-\eps)t)\sum_{k\ge 1}\Prob(\sigma\ge k)\\
&=\ t\,\Erw\sigma\,\Prob(\eta_{1}>(1-\eps)t)\ <\ \infty
\end{align*}
and thereupon
$$ \limsup_{t\to\infty}\,t\,(I_{1}(t)+I_{2}(t))\ =\ \limsup_{t\to\infty}\,t\,I_{2}(t)\ \le\ \frac{s\,\Erw\sigma}{1-\eps}. $$
Hence \eqref{eq:upper bound} follows upon letting $\eps$ tend to 0.

\vspace{.1cm}
(b) It remains to show the inequality
\begin{equation}\label{eq:lower bound}
\liminf_{t\to\infty}\,t\,\Prob\left(\log\max_{1\le k\le \sigma}\,\Pi_{k-1}Q_{k}>t\right)\ \ge\ s\,\Erw\sigma
\end{equation}
which is nontrivial only when assuming $s\in (0,\infty]$. To this end observe that
$$ \log\max_{1\le k\le \sigma}\,\Pi_{k-1}Q_{k}\ =\ \max_{1\le k\le\sigma}(S_{k-1}+\eta_{k})\ \ge\ \max_{1\le k\le\sigma\wedge\tau(c)}\eta_{k}-c $$
for any $c>0$, where $\tau(c):=\inf\{n\ge 1:S_{n}<-c\}$. Since, furthermore,
\begin{align*}
\Prob\left(\max_{1\le k\le \sigma\wedge\tau(c)}\,\eta_{k}>t\right)\ &=\ \Erw\left(\sum_{k=1}^{\sigma\wedge\tau(c)}\1_{\{\eta_{1}\vee...\eta_{k-1}\le t,\eta_{k}>t\}}\right)\\
&=\ \sum_{k\ge 1}\Prob\left(\max_{1\le j\le k-1}\eta_{j}\le t,\,\eta_{k}>t,\,\sigma\wedge\tau(c)\ge k\right)\\
&=\ \Prob(\eta_{1}>t)\sum_{k\ge 1}\Prob\left(\max_{1\le j\le k-1}\eta_{j}\le t,\,\sigma\wedge\tau(c)\ge k\right),
\end{align*}
we find
\begin{align*}
t\,&\Prob\left(\log\max_{1\le k\le \sigma}\,\Pi_{k-1}Q_{k}>t\right)\\
&\ge\ t\,\Prob(\eta_{1}>t+c)\sum_{k\ge 1}\Prob\left(\max_{1\le j\le k-1}\eta_{j}\le t,\,\sigma\wedge\tau(c)\ge k\right)\\
&\underset{t\to\infty}{\longrightarrow}\ s\,\Erw(\sigma\wedge\tau(c)),
\end{align*}
and this implies \eqref{eq:lower bound} upon letting $c$ tend to $\infty$, for $\sigma\wedge\tau(c)\uparrow\sigma$.\qed
\end{proof}

By combining the previous result with a simple stochastic majorization argument, we obtain the following extension.

\begin{Lemma}\label{tail lemma 2}
Let $s_{*}$ and $s^{*}$ be as defined in \eqref{eq:def of s_* and s^*}. Then
\begin{align}
\limsup_{t\to\infty}\,t\,\Prob(\log \wh{X}_{\sigma}>t)\ &\le\ s^{*}\,\Erw\sigma\label{565+}\\
\text{and}\quad\liminf_{t\to\infty}\,t\,\Prob(\log \wh{X}_{\sigma}>t)\ &\ge\ s_{*}\,\Erw\sigma.\label{565-}
\end{align}
\end{Lemma}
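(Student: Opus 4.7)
The plan is to derive both bounds from Lemma \ref{tail lemma} by a stochastic sandwiching argument; I will present the upper bound \eqref{565+} in detail, the lower bound \eqref{565-} being completely analogous. Assuming $s^{*}<\infty$ (otherwise the claim is trivial), I fix $\eps>0$ and, invoking the definition of $s^{*}$, pick $t_{0}$ with $\Prob(\log Q>t)\le(s^{*}+\eps)/t$ for all $t\ge t_{0}$. The idea is to majorize $Q$ by an iid sequence with tail function of the form
$$ \overline{G}(t)\ =\ 1 \text{ for } t<t_{1}, \qquad \overline{G}(t)\ =\ (s^{*}+\eps)/t \text{ for } t\ge t_{1}, $$
where $t_{1}\ge\max(t_{0},s^{*}+\eps)$ makes $\overline G$ a valid right-continuous $[0,1]$-valued tail dominating $\Prob(\log Q>\cdot)$ pointwise, while $t\,\overline{G}(t)\to s^{*}+\eps$ by construction.

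On an enlargement of the probability space I introduce iid random variables $(\wt Q_{k})_{k\ge 1}$ coupled to $(Q_{k})_{k\ge 1}$ by the inverse-distribution-function method (with auxiliary independent uniforms if needed to handle atoms), so that $\log\wt Q_{k}$ has tail $\overline{G}$, $\wt Q_{k}\ge Q_{k}$ a.s., and $(M_{k},Q_{k},\wt Q_{k})_{k\ge 1}$ remains iid across $k$. In the enlarged filtration $\cF_{n}:=\sigma(M_{j},Q_{j},\wt Q_{j}:j\le n)$ the stopping time $\sigma$ remains one, $(M_{k+1},\wt Q_{k+1})$ is independent of $\cF_{k}$, and $(M_{k},\wt Q_{k})_{k\ge 1}$ is an iid $\cF$-adapted sequence. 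Since the proof of Lemma \ref{tail lemma} uses only these iid-and-adapted properties together with Wald's identity, it applies verbatim to the perpetuity generated by $(M_{k},\wt Q_{k})$ and yields
$$ \lim_{t\to\infty}t\,\Prob\bigg(\log\sum_{k=1}^{\sigma}\Pi_{k-1}\wt Q_{k}>t\bigg)\ =\ (s^{*}+\eps)\,\Erw\sigma. $$
The coupling delivers $\wh X_{\sigma}\le\sum_{k=1}^{\sigma}\Pi_{k-1}\wt Q_{k}$ a.s., whence $\limsup_{t\to\infty}t\,\Prob(\log\wh X_{\sigma}>t)\le(s^{*}+\eps)\,\Erw\sigma$; letting $\eps\downarrow 0$ proves \eqref{565+}.

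For \eqref{565-} I may assume $s_{*}>0$; given $\eps\in(0,s_{*})$, I pick $t_{0}$ with $\Prob(\log Q>t)\ge(s_{*}-\eps)/t$ for $t\ge t_{0}$, then construct a minorizing tail $\underline{G}$ satisfying $\underline{G}\le\Prob(\log Q>\cdot)$ everywhere and $t\,\underline{G}(t)\to s_{*}-\eps$ (e.g.\ constant value $\Prob(\log Q>t_{0})$ for $t\le t_{0}$ and $\underline{G}(t)=(s_{*}-\eps)/t$ for $t>t_{0}$). The same inverse-CDF coupling produces iid $\wt Q_{k}\le Q_{k}$ a.s.\ with log-tail $\underline{G}$, so Lemma \ref{tail lemma} applied as above together with the reversed pathwise inequality $\wh X_\sigma\ge\sum_{k=1}^{\sigma}\Pi_{k-1}\wt Q_{k}$ gives $\liminf_{t\to\infty}t\,\Prob(\log\wh X_{\sigma}>t)\ge(s_{*}-\eps)\,\Erw\sigma$, hence \eqref{565-}.

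The only subtlety I anticipate is the filtration bookkeeping — making Lemma \ref{tail lemma} applicable to the sandwiching sequences although $\sigma$ is defined through $(M_{k},Q_{k})$. This is handled by the enlargement described above and poses no substantive difficulty, since the proof of Lemma \ref{tail lemma} never uses minimality of the stopping-time filtration.
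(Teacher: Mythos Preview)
Your proof is correct and follows essentially the same approach as the paper's: stochastic majorization (respectively minorization) of $Q$ by a random variable whose log-tail satisfies the exact limit condition \eqref{tail2}, followed by an appeal to Lemma~\ref{tail lemma}. The paper uses the dominating tail $\overline{G}(t)=\overline{F}(t)\vee\frac{s}{s+t}$ on $(0,\infty)$ for $s>s^{*}$ (and the analogous minimum for the lower bound), whereas you use a piecewise constant/hyperbolic construction; both choices work, and your explicit discussion of the filtration enlargement is a welcome clarification that the paper leaves implicit.
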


\begin{proof}
For \eqref{565+}, we may assume $s^{*}<\infty$. Recall the notation $F(t)=\Prob(\log Q\le t)$ and put $\ovl{F}:=1-F$. Then define the new distribution function $G$ by
$$ \ovl{G}(t)\ :=\ \1_{(-\infty,0]}(t)+\left(\ovl{F}(t)\vee\frac{s}{s+t}\right)\1_{(0,\infty)}(t) $$
for some arbitrary $s>s^{*}\ ($we can even choose $s=s^{*}$ unless $s^{*}=0)$. Since $\ovl{G}\ge\ovl{F}$, we may construct (on a possibly enlarged probability space) random variables $Q',\,Q_{1}',\,Q_{2}',\ldots$ such that $(M,Q,Q'),\,(M_{1},Q_{1},Q_{1}'),\,(M_{2},Q_{2},Q_{2}'),\ldots$ are iid, the distribution function of $\log Q'$ is $G$, and $Q'\ge Q$, thus
$$ \wh{X}_{\sigma}'\ :=\ \sum_{k=1}^{\sigma}\Pi_{k-1}Q_{k}'\ \ge\ \wh{X}_{\sigma}. $$
On the other hand, $\ovl{G}(t)=\Prob(\log Q'>t)$ satisfies the tail condition \eqref{tail2}, whence, by an appeal to Lemma \ref{tail lemma},
$$ \limsup_{t\to\infty}\,t\,\Prob(\log \wh{X}_{\sigma}>t)\ \le\ \lim_{t\to\infty}\,t\,\Prob(\log \wh{X}_{\sigma}'>t)\ =\ s\,\Erw\sigma. $$
This proves \eqref{565+} because $s-s^{*}$ can be chosen arbitrarily small.

\vspace{.1cm}
Assertion \eqref{565-} for $s>0$ is proved in a similar manner. Indeed, pick any $s\in (0,s_{*})\ ($or even $s_{*}$ itself unless $s_{*}=\infty)$ and define
$$ \ovl{G}(t)\ :=\ \left(\ovl{F}(t)\wedge\frac{s}{s+t}\right)\1_{[0,\infty)}(t) $$
which obviously satisfies $\ovl{G}\le\ovl{F}$. In the notation from before, we now have $Q'\le Q$ and thus $\wh{X}_{\sigma}'\le \wh{X}_{\sigma}$. Since again $\ovl{G}(t)=\Prob(\log Q'>t)$ satisfies the tail condition \eqref{tail2}, we easily arrive at the desired conclusion by another appeal to Lemma \ref{tail lemma}.\qed
\end{proof}

Our last tail lemma will be crucial for the proof of Theorem \ref{main12}. Given any $0<\gamma<1$, recall that $X_{0}(\gamma)=X_{0}$ and
$$ X_{n}(\gamma)\ =\ \gamma X_{n-1}(\gamma)+Q_{n} $$
for $n\ge 1$. Let $\sigma$ be any integrable stopping time for $(X_{n})_{n\ge 0}$ and note that
$$ X_{\sigma}(\gamma)\ =\ \gamma^{\sigma}X_{0}+Q(\gamma), $$
where
$$ Q(\gamma)\ :=\ \sum_{k=1}^{\sigma}\gamma^{\sigma-k}Q_{k}. $$
More generally, if $(\sigma_{n})_{n\ge 0}$ denotes a renewal stopping sequence for $(X_{n})_{n\ge 0}$ with $\sigma=\sigma_{1}$, then
$$ X_{\sigma_{n}}(\gamma)\ =\ \gamma^{\sigma_{n}-\sigma_{n-1}}X_{\sigma_{n-1}}(\gamma)+Q_{n}(\gamma) $$
for $n\ge 1$ with iid $(\gamma^{\sigma_{n}-\sigma_{n-1}},Q_{n}(\gamma))_{n\ge 1}$ and $Q_{\sigma_{1}}(\gamma)=Q(\gamma)$.

\begin{Lemma}\label{tail lemma 3}
Let $\gamma\in (0,1)$ and $\sigma,Q(\gamma)$ be as just introduced. If $Q$ satisfies condition \eqref{tail2}, then
\begin{equation*}
\lim_{t\to\infty}t\,\Prob(\log Q(\gamma)>t)\ =\ s\,\Erw\sigma,
\end{equation*}
where the right-hand side equals $0$ if $s=0$, and $\infty$ if $s=\infty$. More generally, with $s_{*},s^{*}$ as defined in \eqref{eq:def of s_* and s^*}, it is always true that
\begin{align*}
\begin{split}
s_{*}\,\Erw\sigma\ &\le\ \liminf_{t\to\infty}t\,\Prob(\log Q(\gamma)>t)\\
&\le\ \limsup_{t\to\infty}t\,\Prob(\log Q(\gamma)>t)\ \le\ s^{*}\,\Erw\sigma.
\end{split}
\end{align*}
\end{Lemma}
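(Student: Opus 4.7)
The plan is to mimic the proof of Lemma~\ref{tail lemma}, substituting the random product $\Pi_{k-1}$ by the deterministic (once $\sigma$ is known) weight $\gamma^{\sigma-k}\in(0,1]$. Write $\eta_{k}:=\log Q_{k}$ and $c:=\log(1/\gamma)>0$. It is enough to establish the two-sided $s_{\ast},s^{\ast}$-bounds; the limiting statement under \eqref{tail2} then follows by setting $s_{\ast}=s^{\ast}=s$.

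For the upper bound, estimate $Q(\gamma)\le\sigma\,\max_{k\le\sigma}Q_{k}$ (since every $\gamma^{\sigma-k}\le 1$) and split
$$ \Prob(\log Q(\gamma)>t)\ \le\ \Prob(\log\sigma>\eps t)+\Prob\Big(\max_{k\le\sigma}\eta_{k}>(1-\eps)t\Big). $$
The first summand is $o(t^{-1})$ by Markov's inequality and $\Erw\sigma<\infty$. For the second, since $\{\sigma\ge k\}\in\cF_{k-1}$ is independent of $\eta_{k}$, Wald's identity yields the bound $\Erw\sigma\cdot\Prob(\eta>(1-\eps)t)$. Multiplying by $t$, using the definition of $s^{\ast}$, and letting $\eps\downarrow 0$ gives $\limsup_{t\to\infty}t\,\Prob(\log Q(\gamma)>t)\le s^{\ast}\Erw\sigma$.

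For the lower bound, fix $M\ge 1$ and observe that $\gamma^{\sigma-k}\ge\gamma^{M}$ for $k\in[(\sigma-M)_{+}\vee 1,\sigma]$, so
$$ Q(\gamma)\ \ge\ \gamma^{M}\max_{(\sigma-M)_{+}\vee 1\le k\le\sigma}Q_{k}. $$
With $r:=t+Mc$, $p:=\Prob(\eta>r)$ and $\kappa:=\inf\{k\ge 1:\eta_{k}>r\}$, this yields
$$ \Prob(\log Q(\gamma)>t)\ \ge\ \Prob\bigl(\kappa\in[(\sigma-M)_{+}\vee 1,\sigma]\bigr)\ =\ \Prob(\kappa\le\sigma)-\Prob(\sigma>\kappa+M). $$
The renewal identity from the proof of Lemma~\ref{tail lemma}(b)---using that $\{\kappa\wedge\sigma\ge k\}\in\cF_{k-1}$ is independent of $\eta_{k}$---yields $\Prob(\kappa\le\sigma)=p\,\Erw(\kappa\wedge\sigma)$, while Markov's inequality gives $\Prob(\sigma>\kappa+M)\le\Erw(\sigma-\kappa)_{+}/M$. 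Since $\limsup_{k}\eta_{k}=\infty$ a.s., one has $\kappa\to\infty$ a.s.\ as $r\to\infty$, hence $\Erw(\kappa\wedge\sigma)\uparrow\Erw\sigma$ by monotone convergence and $\phi(p):=\Erw(\sigma-\kappa)_{+}\to 0$ by dominated convergence with envelope $\sigma$. Choosing $M=M(t)=\sqrt{\phi(p)}/p$ simultaneously guarantees $Mc=o(t)$ (so that $\liminf_{t\to\infty}tp=s_{\ast}$) and $\phi(p)/(Mp)=\sqrt{\phi(p)}\to 0$ (so that the second error term is $o(p)$), leading to $\liminf_{t\to\infty}t\,\Prob(\log Q(\gamma)>t)\ge s_{\ast}\Erw\sigma$.

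The main obstacle is precisely this delicate balance of $M(t)$ in the lower bound: it must be large enough that $\Prob(\sigma>\kappa+M)=o(p)$ yet small enough that the shift $Mc$ leaves the asymptotic $tp\sim s_{\ast}$ intact. The choice $M=\sqrt{\phi(p)}/p$ works regardless of the a priori unspecified decay rate of $\phi(p)$, so the lemma holds for any integrable $\sigma$; in the case that $\sigma$ depends only on $(M_{k})_{k\ge 1}$ and is therefore independent of $(Q_{k})_{k\ge 1}$ (as for the ladder epochs used in the sequel), the argument becomes particularly transparent via the direct expansion $\Prob(\kappa\le\sigma)=1-\Erw(1-p)^{\sigma}=p\Erw\sigma+o(p)$.
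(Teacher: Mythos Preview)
Your approach is broadly in line with the paper's, which only sketches the proof by pointing to the inequality $\max_{k\le\sigma}\gamma^{\sigma-k}Q_{k}\le Q(\gamma)\le\sigma\max_{k\le\sigma}\gamma^{\sigma-k}Q_{k}$ and saying the arguments of Lemmata~\ref{tail lemma} and~\ref{tail lemma 2} carry over ``and are even slightly simpler.'' Your upper bound is correct and indeed simpler than in Lemma~\ref{tail lemma}(a): since $\gamma^{\sigma-k}\le 1$, the term $I_{1}(t)$ controlling $\max_{k}S_{k}$ disappears entirely. For the lower bound, the paper's device of an auxiliary stopping time $\tau(c)$ does not transcribe directly, because the weight $-c(\sigma-k)$ depends on the \emph{future} through $\sigma$; your idea of restricting to the last $M$ indices before $\sigma$ is therefore the natural adaptation.

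There is, however, a genuine gap in the execution: your choice $M=\sqrt{\phi(p)}/p$ is circular, since $p=\Prob(\eta>r)$ with $r=t+Mc$, so $M$ is defined in terms of itself and you never argue that a consistent solution exists. The clean fix is to pick $M=M(t)$ explicitly in $t$ alone, satisfying $t\phi(t)\ll M(t)\ll t$; for instance $M(t):=t\sqrt{\phi(t)}$ with $\phi(t):=\Erw(\sigma-\kappa(t))_{+}$ and $\kappa(t):=\inf\{k:\eta_{k}>t\}$. Then $r/t\to 1$ because $M/t=\sqrt{\phi(t)}\to 0$, while $t\,\Prob(\sigma>\kappa(r)+M)\le t\phi(r)/M\le t\phi(t)/M=\sqrt{\phi(t)}\to 0$ (using $\phi(r)\le\phi(t)$ since $\kappa(r)\ge\kappa(t)$), and the argument goes through without any implicit equation and regardless of whether $s^{\ast}<\infty$. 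A smaller point: the justification ``$\limsup_{k}\eta_{k}=\infty$ a.s.'' for $\kappa\to\infty$ is both unnecessary and potentially false (e.g.\ if $Q$ is bounded); the correct reason is simply that each $\eta_{k}$ is a.s.\ finite, so $\kappa(r)>k$ once $r>\max_{j\le k}\eta_{j}$.
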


\begin{proof}
Embarking on the obvious inequality (compare \eqref{Qsigma* bounds})
$$ \max_{1\le k\le\sigma}\gamma^{\sigma-k}Q_{k}\ \le\ Q(\gamma)\ \le\ \sigma\max_{1\le k\le\sigma}\gamma^{\sigma-k}Q_{k}, $$
the arguments are essentially the same and even slightly simpler than those given for the proofs of Lemmata \ref{tail lemma} and \ref{tail lemma 2}. We therefore omit further details.\qed
\end{proof}

\section{Proof of Theorems \ref{main11} and \ref{main12}}\label{sec:main11 and main12}

\begin{proof}[of Theorem \ref{main11}]
(a) \emph{Null recurrence}: We keep the notation of the previous sections, in particular $S_{n}=\log\Pi_{n}$ and $\eta_{n}=\log Q_{n}$ for $n\ge 1$. For an arbitrary $c>0$, let $(\sigma_{n})_{n\ge 0}$ be the integrable renewal stopping sequence with
$$ \sigma\ =\ \sigma_{1}\ :=\ \inf\{n\ge 1:S_{n}<-c\}. $$
Then
$$ \big(M_{n}^{*},Q_{n}^{*}\big)\ :=\ \left(\frac{\Pi_{\sigma_{n}}}{\Pi_{\sigma_{n-1}}},\sum_{k=\sigma_{n-1}+1}^{\sigma_{n}}\frac{\Pi_{k-1}}{\Pi_{\sigma_{n-1}}}Q_{k}\right),\quad n\ge 1, $$
are independent copies of $(\Pi_{\sigma},\sum_{k=1}^{\sigma}\Pi_{k-1}Q_{k})$. Put
$$ \Pi_{0}^{*}\ :=\ 1\quad\text{and}\quad\Pi_{n}^{*}\ :=\ \prod_{k=1}^{n}M_{k}^{*}\quad\text{for }n\ge 1. $$
By Lemma \ref{tail lemma 2},
$$ \limsup_{t\to\infty}\,t\,\Prob(\log Q_{1}^{*}>t)\ \le\ s^{*}\,\Erw\sigma $$

As already pointed out in the Introduction, validity of \eqref{trivial}, \eqref{trivial2} and \eqref{30} implies that $(X_{n})_{n\ge 0}$ cannot be positive recurrent. We will always assume $X_{0}=\wh{X}_{0}=0$ hereafter. By Proposition \ref{recur}, the null recurrence of $(X_{n})_{n\ge 0}$ follows if we can show that
\begin{equation}\label{eq:to show}
\sum_{n\ge 1}\Prob(X_{n}\le t)\ =\ \sum_{n\ge 1}\Prob(\wh{X}_{n}\le t)\ =\ \infty
\end{equation}
for some $t>0$ or, a fortiori,
\begin{equation}\label{eq2:to show}
\sum_{n\ge 1}\Prob(\wh{X}_{\sigma_{n}}\le t)\ =\ \infty.
\end{equation}
We note that $\wh{X}_{\sigma_{n}}=\sum_{k=1}^{\sigma_{n}}\Pi_{k-1}Q_{k}=\sum_{k=1}^{n}\Pi_{k-1}^{*}Q_{k}^{*}$ and pick an arbitrary nondecreasing sequence $0=a_{0}\le a_{1}\le\ldots$ such that
$$ a\ :=\ \sum_{n\ge 0}e^{-a_{n}}\ <\ \infty. $$
Fix any $z>0$ so large that
\begin{equation*}
\Prob\left(Q_{1}^{*}\le\frac{z}{a}\right)\ >\ 0.
\end{equation*}
Using $M_{n}^{*}<1$ for all $n\ge 1$, we then infer that
\begin{align*}
\Prob\left(\max_{1\le k\le n}e^{a_{k-1}}\Pi_{k-1}^{*}Q_{k}^{*}\le\frac{t}{a}\right)\ &\ge\ \Prob\left(\max_{1\le k\le n}\Pi_{k-1}^{*}Q_{k}^{*}\le\frac{t}{a}\right)\\
&\ge\ \Prob\left(Q_{1}^{*}\le\frac{t}{a}\right)^{n}\ >\ 0.
\end{align*}
Furthermore,
\begin{equation*}
\Prob(\wh{X}_{\sigma_{n}}\le t)\ =\ \Prob\left(\sum_{k=1}^{n}\Pi_{k-1}^{*}Q_{k}^{*}\le t\right)\ \ge\ \Prob\left(\max_{1\le k\le n}e^{\,a_{k-1}}\Pi_{k-1}^{*}Q_{k}^{*}\le\frac{t}{a}\right),
\end{equation*}
because $\max_{1\le k\le n}e^{\,a_{k-1}}\Pi_{k-1}^{*}Q_{k}^{*}\le\frac{t}{a}$ implies
$$ \sum_{k=1}^{n}\Pi_{k-1}^{*}Q_{k}^{*}\ \le\ \frac{t}{a}\sum_{k=1}^{n}e^{-a_{k-1}}\ \le\ t. $$
Consequently,
\begin{equation}\label{eq3:to show}
\sum_{n\ge 1}\Prob\left(\max_{1\le k\le n}e^{a_{k-1}}\Pi_{k-1}^{*}Q_{k}^{*}\le\frac{t}{a}\right)\ =\ \infty
\end{equation}
implies \eqref{eq2:to show}, and thus \eqref{eq:to show}.

\vspace{.1cm}
By choice of the $\sigma_{n}$, we have $\log\Pi_{k}^{*}\le -ck$ a.s. Putting $x=\log t-\log a$, we have with $a_{k}=o(k)$ as $k\to\infty\ ($choose e.g. $a_{k}=2\,\log(1+k))$
\begin{align*}
\sum_{n\ge 1}\,&\Prob\left(\max_{1\le k\le n}e^{a_{k-1}}\Pi_{k-1}^{*}Q_{k}^{*}\le\frac{t}{a}\right)\\
&\ge\ \sum_{n\ge 1}\Prob\left(\max_{1\le k\le n}\big(-c(k-1)+a_{k-1}+\log Q_{k}^{*}\big)\le x\right)\\
&=\ \sum_{n\ge 1}\prod_{k=1}^{n}\Prob\big(\log Q_{1}^{*}\le x-a_{k-1}+c(k-1)\big)
\end{align*}
Defining $b_{n}$ as the $n$th summand in the previous sum and writing $\sigma=\sigma(c)$ to show the dependence on $c$, Lemma \ref{tail lemma 2} provides us with
$$ \liminf_{n\to\infty}\,n\left(\frac{b_{n+1}}{b_{n}}-1\right)\ \ge\ -s^{*}\,\frac{\Erw\sigma(c)}{c}, $$
hence Raabe's test entails \eqref{eq3:to show} if we can fix $c>0$ such that
\begin{equation}\label{eq4:to show}
s^{*}\,\frac{\Erw\sigma(c)}{c}\ <\ 1.
\end{equation}
Plainly, the latter holds true for any $c>0$ if $s^{*}=0$. But if $s^{*}\in (0,\infty)$, then use the elementary renewal theorem to infer (also in the case $\fm=-\infty$)
$$ \lim_{c\to\infty}\,s^{*}\,\frac{\Erw\sigma(c)}{c}\ =\ \frac{s^{*}}{-\fm}\ <\ 1. $$
Hence, \eqref{eq4:to show} follows by our assumption $s^{*}<-\fm$.\qed

\vspace{.2cm}\noindent
(b) \emph{Transience}: By Proposition \ref{transient}, it must be shown that
$$ \sum_{n\ge 0}\Prob(\wh{X}_{n}\le t)\ <\ \infty $$
for any $t>0$. We point out first that it suffices to show
\begin{equation}\label{reduced sum finite}
\sum_{n\ge 0}\Prob(\wh{X}_{\sigma_{n}}\le t)\ <\ \infty
\end{equation}
for some integrable renewal stopping sequence $(\sigma_{n})_{n\ge 0}$. Namely, since $(\wh{X}_{n})_{n\ge 0}$ is nondecreasing, it follows that
\begin{align*}
\sum_{n\ge 0}\Prob(\wh{X}_{n}\le t)\ &=\ \sum_{n\ge 0}\Erw\left(\sum_{k=\sigma_{n}}^{\sigma_{n+1}-1}\1_{\{\wh{X}_{k}\le t\}}\right)\\
&\le\ \sum_{n\ge 0}\Erw\big(\sigma_{n+1}-\sigma_{n}\big)\1_{\{\wh{X}_{\sigma_{n}}\le t\}}\\
&=\ \Erw\sigma\sum_{n\ge 0}\Prob(\wh{X}_{\sigma_{n}}\le t),
\end{align*}
where we have used that $\sigma_{n+1}-\sigma_{n}$ is independent of $\wh{X}_{\sigma_{n}}$ for each $n\ge 0$.

\vspace{.1cm}
Choosing $(\sigma_{n})_{n\ge 0}=(\sgn(c))_{n\ge 0}$ as defined before Lemma \ref{lem:bounding lemma} for an arbitrary $c\in (-\fm,s_{*})$, part (b) of this lemma provides us with
\begin{align}\label{wh(X)_{n}>=wh(Y)_{n}}
\wh{X}_{\sigma_{n}}\ \ge\ \wh{Y}_{n}\ =\ \sum_{k=1}^{n}\gamma^{\sigma_{k-1}}Q_{k}^{*}\quad\text{a.s.}
\end{align}
for all $n\ge 0$, where the $Q_{n}^{*}$ are formally defined as in (a) for the $\sigma_{n}$ given here and the $\wh{Y}_{n}$ are the backward iterations of the Markov chain defined by the RDE
$$ Y_{n}\ =\ \gamma^{\sigma_{n}-\sigma_{n-1}}Y_{n-1}+Q_{n}^{*},\quad n\ge 1. $$
By Lemma \ref{tail lemma 2},
$$ \liminf_{t\to\infty}t\,\Prob(\log Q^{*}>t)\ \ge\ s_{*}\Erw\sigma. $$
Let $(Q_{n}')_{n\ge 1}$ be a further sequence of iid random variables with generic copy $Q'$, independent of all other occurring random variables and such that
\begin{equation}\label{eq:tail Q'}
\lim_{t\to\infty}t\,\Prob(\log Q'>t)\ =:\ s\ \in\ (c,s_{*}).
\end{equation}
Put $\gamma:=e^{-c}$. Then Kellerer's result (Proposition \ref{Kellerer's result}) implies the transience of the Markov chain $X_{n}'(\gamma)=\gamma X_{n-1}'(\gamma)+Q_{n}'$, $n\ge 1$, and thus also of the subchain $(X_{\sigma_{n}}'(\gamma))_{n\ge 0}$. Since $\wh{X}_{\sigma_{n}}'(\gamma)\ =\ \sum_{k=1}^{n}\gamma^{\sigma_{k-1}}\wh{Q}_{k}$ with
$$ \wh{Q}_{n}\ =\ \sum_{k=\sigma_{n-1}+1}^{\sigma_{n}}\gamma^{k-\sigma_{n-1}-1}Q_{k}'  $$
for $n\ge 1$ and since, by \eqref{eq:tail Q'} and Lemma \ref{tail lemma},
$$ \lim_{t\to\infty}t\,\Prob(\log\wh{Q}>t)\ =\ s\,\Erw\sigma, $$
thus $\Prob(Q^{*}>t)\ge\Prob(\wh{Q}>t)$ for all sufficiently large $t$, we now infer by invoking our Comparison Lemma \ref{comparison lemma} that the transience of $(X_{\sigma_{n}}'(\gamma))_{n\ge 0}$ entails the transience of $(Y_{n})_{n\ge 0}$ given above and thus
$$ \sum_{n\ge 0}\Prob(\wh{Y}_{n}\le t)\ <\ \infty $$
for all $t>0$. Finally, use \eqref{wh(X)_{n}>=wh(Y)_{n}} to arrive at \eqref{reduced sum finite}.
This completes the proof of part (b).\qed
\end{proof}

\begin{proof}[of Theorem \ref{main12}]
Fix $c>s^{*}$ and put as before $\gamma=e^{-c}$. Since $S_{n}=\log\Pi_{n}\to-\infty$ a.s. and $\fm^{+}=\fm^{-}=\infty$, we have
$$ \lim_{n\to\infty}\frac{S_{n}}{n}\ =\ \lim_{n\to\infty}\frac{S_{n}+an}{n}\ =\ -\infty\quad\text{a.s. for all }a\in\R $$
due to Kesten's trichotomy (see e.g. \cite[p.~3]{KesMal:96}) and hence in particular $S_{n}+cn\to -\infty$ a.s. As a consequence, the sequence $(\sigma_{n})_{n\ge 0}=(\sln(c))_{n\ge 0}$ as defined before Lemma \ref{lem:bounding lemma} is an integrable renewal stopping sequence for $(X_{n})_{n\ge 0}$. Part (a) of this lemma implies
$$ X_{\sigma_{n}}\ \le\ X_{\sigma_{n}}(\gamma)\ =\ \gamma^{\sigma_{n}-\sigma_{n-1}}X_{\sigma_{n-1}}(\gamma)+Q_{n}(\gamma)\quad\text{a.s.} $$
for all $n\ge 0$, where $Q_{n}(\gamma)=\sum_{k=\sigma_{n-1}+1}^{\sigma_{n}}\gamma^{\sigma_{n}-k}Q_{k}$ for $n\ge 1$. Hence it is enough to prove the null recurrence of $(X_{\sigma_{n}}(\gamma))_{n\ge 0}$. To this end, note first that $\fm(\gamma):=\Erw\log\gamma^{\sigma_{1}}=-c\,\Erw\sigma_{1}\in (-\infty,0)$. Moreover, Lemma \ref{tail lemma 3}  provides us with
$$ \limsup_{t\to\infty}t\,\Prob(\log Q_{1}(\gamma)>t)\ \le\ s^{*}\Erw\sigma_{1}\ <\ c\,\Erw\sigma_{1}\ =\ -\fm(\gamma), $$
and so the null recurrence of $(X_{\sigma_{n}}(\gamma))_{n\ge 0}$ follows from Theorem \ref{main11}.\qed
\end{proof}

\section{On the structure of the attractor set}\label{attr}

The purpose of this section is to investigate the structure of the attractor set $L$ for the Markov chain $(X_{n})_{n\ge 0}$ defined by \eqref{chain}. Unlike before, we assume hereafter that $(X_{n})_{n\ge 0}$ is locally contractive and recurrent, the latter being an inevitable assumption for $L\ne\oslash$. To exclude the ``trivial case'' (as explained in the introduction) we assume $\Prob(M=0)=0$. Recall from the paragraph preceding Proposition \ref{recur} that $L$ consists of all accumulation points of $(X_{n}^{x}(\omega))_{n\ge 0}$ which turns out to be the same for all $x\in\R_{+}$ and $\Prob$-almost all $\omega$. As already mentioned in the Introduction, $(X_{n})_{n\ge 0}$ possesses a unique invariant distribution, say $\nu$, if \eqref{trivial2} and \eqref{33} hold. The attractor set then coincides with the support of $\nu$. In the positive recurrent case the structure of $L$ was analyzed in \cite{BurDamMik:16}. According to Theorem 2.5.5 from there, $L$ necessarily equals a half-line $[a,\infty)$ for some $a\ge 0$ if it is unbounded. If $L$ is bounded, no general results concerning local properties of $L$ are known. It may equally well be a fractal (for instance, a Cantor set) or an interval. Below we consider both the positive and null recurrent case. The second one is implied by hypotheses of Theorem \ref{main11} a), but also holds when $\Erw \log M = 0$ (see \cite{BabBouElie:97,Benda:98b} for more details).

\vspace{.1cm}
For $(m,q)\in\R_{+}^{2}$, let $g$ be the affine transformation of $\R$ defined by
\begin{equation*}
g(x)=mx+q\,,\quad x\in\R.
\end{equation*}
We will write $g=(m,q)$, thereby identifying $g$ with $(m,q)$. The affine transformations constitute a group ${\sf Aff}(\R )$ with identity $(1,0)$
and multiplication defined by
$$g_{1}g_{2}=(m_{1},q_{1})\,(m_{2},q_{2})=(m_{1}m_{2},q_{1}+m_{1}q_{2})$$ for $g_{i}=(m_{i}, q_{i})$, $i=1,2$. The inverse of $g=(m,q)$ is given by $g^{-1}=(m^{-1},-m^{-1}q)$.

\vspace{.1cm}
Assuming $m\ne1$, let $x_{0} = x_{0}(g)=q/(1-m)$ be the unique fixed point of $g$, that is the unique solution to the equation $g(x)=x$. Then
$$ g(x)\ =\ m \,(x-x_{0})+x_{0}\,,\quad x\in\R$$ and similarly
\begin{equation}\label{eq:x0}
g^{n}(x)\ =\ m^{n}x+q_{n}= m^{n}\, (x-x_{0}) + x_{0}\,,\quad x\in \R,\ n\ge 1,
\end{equation}
where $q_{n}=\sum _{i=0}^{n-1}m^i\,q$. Formula \eqref{eq:x0} tells us that, modulo $x_{0}$, the action of $g$ is either contractive or expanding depending on whether $m<1$ or $m>1$, respectively.

\vspace{.1cm}
We interpret $\mu$, the distribution of $(M,Q)$, as a probability measure on ${\sf Aff}(\R)$ hereafter and let ${\rm supp}\,\mu$ denote its support. Consider the subsemigroup $T$  of ${\sf Aff} (\R)$
generated by ${\rm supp }\ \mu $, i.e.
$$
T\ :=\ \{ g_{1}\cdot \ldots \cdot g_{n}: g_{i} \in \supp\,\mu,\ i=1,\ldots,n,\,n\ge 1\}\,,
$$
and let $\ov T$ be its closure. A set $S\subset \R$ is said to be {\em $\ov T$-invariant} if for every $g\in \ov T$ and $x\in S$, $g(x)=mx+q  \in S$. The following result was stated in a slightly different setting as Proposition 2.5.3 in \cite{BurDamMik:16} and can be proved by the same arguments after minor changes.

\begin{Lemma}\label{lem:attractor}
Let $(X_{n})_{n\ge 0}$ be locally contractive and recurrent. Then $L=\ov S_{0}$, where
$$ S_{0}\ :=\ \{(1-m)^{-1}q: g=(m,q)\in T \,, m<1\}. $$
Moreover, $L$ equals the smallest $\ov T$-invariant subset of $\R$.
\end{Lemma}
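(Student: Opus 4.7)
The strategy is to establish $\ov{S_0} \subseteq L$ and $L \subseteq \ov{S_0}$ by using two ingredients: the deterministic equality $L^x = L$ a.s.\ for every $x$, which yields a zero-one alternative for any question of the form ``does a given point lie in $L^x$?''; and the semigroup property $X_n^x = (\Psi_n\circ\cdots\circ\Psi_1)(x)$ with $\Psi_n\cdots\Psi_1 \in T$ a.s., which makes any $\ov T$-invariant closed set trapping for the chain.

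For $\ov{S_0}\subseteq L$, fix $g=g_1\cdots g_n\in T$ with $g_i\in\supp\mu$, multiplicative factor $m<1$, and fixed point $x_0=q/(1-m)$. By \eqref{eq:x0}, $g^k(z)\to x_0$ uniformly on compact sets. For any $\delta,\eps>0$, the event that each of the first $k$ length-$n$ blocks $(\Psi_{jn+1},\ldots,\Psi_{(j+1)n})$ lies in a product of $\delta$-neighborhoods of $(g_n,\ldots,g_1)$ (the reverse order, since forward iteration composes right-to-left) has positive probability by the definition of $\supp\mu$ and independence. For $\delta$ small and $k$ large this forces $X_{kn}^x$ within $\eps$ of $x_0$ on an event of positive probability, hence $\Prob(x_0\in L^x)>0$; the zero-one dichotomy then gives $x_0\in L$, and closedness of $L$ yields $\ov{S_0}\subseteq L$. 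The same scheme, applied after a visit of $X_k^x$ to a neighborhood of an arbitrary $y\in L$ and followed by $n$ controlled iterations, proves $g(y)\in L$ for every $g\in T$, showing that $L$ itself is $\ov T$-invariant.

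For $L\subseteq \ov{S_0}$, I would verify that $\ov{S_0}$ is a nonempty closed $\ov T$-invariant set. Closedness is built-in; nonemptiness follows from the existence of some $g\in T$ with $m<1$, which is forced by local contractivity, recurrence, and $\Prob(M=0)=0$. For invariance, pick $y=q_1/(1-m_1)\in S_0$ with $g_1=(m_1,q_1)\in T$, $m_1<1$, and $h=(m_2,q_2)\in T$. Then $hg_1^k\in T$ for every $k\ge 1$ by the semigroup property, and a direct computation of its fixed point yields
$$x_0(hg_1^k)\ =\ \frac{q_2+m_2\,q_1(1-m_1^k)/(1-m_1)}{1-m_2 m_1^k}\ \xrightarrow{k\to\infty}\ q_2+m_2\,y\ =\ h(y),$$
where the denominator is positive for all sufficiently large $k$, so $hg_1^k$ yields a valid element of $S_0$ eventually; hence $h(y)\in\ov{S_0}$. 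Continuity of the affine action extends $\ov T$-invariance to all of $\ov{S_0}$. Finally, for any nonempty closed $\ov T$-invariant set $A$ and any $y\in A$ one has $X_n^y\in A$ a.s., so $L=L^y\subseteq A$; applying this to $A=\ov{S_0}$ completes $L=\ov{S_0}$ and, together with the $\ov T$-invariance of $L$, identifies $L$ as the smallest such set.

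The main obstacle I foresee is converting ``positive probability of approach'' into ``membership in the deterministic set $L$''; this step rests essentially on the earlier cited construction of the deterministic attractor \cite{Benda:98b,PeigneWoess:11a} rather than on the specific form of the RDE. A secondary but easier issue is producing an element of $T$ with $m<1$ in situations where $\fm$ may equal $0$ or $-\infty$; one argues by contradiction, noting that if $\supp\mu\subseteq\{m\ge 1\}\times\R_+$, then $\Pi_n$ would be nondecreasing, which is incompatible with the coexistence of local contractivity and recurrence together with $\Prob(M=0)=0$.
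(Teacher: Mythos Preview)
The paper does not actually prove this lemma: immediately before the statement it says the result ``was stated in a slightly different setting as Proposition~2.5.3 in \cite{BurDamMik:16} and can be proved by the same arguments after minor changes,'' and no further argument is given. Your outline follows exactly the standard route used there: (i) show that $L$ is closed and $\ov T$-invariant using the deterministic nature of the attractor and the fact that the random maps $\Psi_n\circ\cdots\circ\Psi_1$ lie in $T$ a.s.; (ii) show that $\ov{S_0}$ is a nonempty closed $\ov T$-invariant set by the fixed-point limit trick $x_0(hg_1^k)\to h(y)$; (iii) conclude $L\subseteq\ov{S_0}$ by minimality (any closed $\ov T$-invariant set traps the chain started inside it). So the approach is the right one.

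One point worth tightening, which you flag yourself: from ``$X_{kn}^x$ is within $\eps$ of $x_0$ on an event of positive probability'' you cannot yet conclude $\Prob(x_0\in L^x)>0$, because membership in $L^x$ requires \emph{infinitely many} $\eps$-visits. The clean fix is to combine your block construction with recurrence: the chain returns i.o.\ to some compact set $K$, your estimate gives $\inf_{z\in K}\Prob(|X_{kn}^z-x_0|<\eps)>0$ for suitable $k$ and $\delta$, and then the strong Markov property plus a conditional Borel--Cantelli argument yields $\Prob(|X_n^x-x_0|<\eps\text{ i.o.})=1$. With this adjustment the sketch is complete and matches the referenced proof.
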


For positive recurrent $(X_{n})_{n\ge 0}$, we have already pointed out that $L$, if unbounded, must be
a half-line $[a,\infty)$ ($a\ge 0$). The subsequent theorem provides the extension of this fact to any locally contractive and recurrent $(X_{n})_{n\ge 0}$.

\begin{Theorem}
Let $(X_{n})_{n\ge 0}$ be locally contractive and recurrent with unbounded attractor set $L$. If $\Prob(M=0)=0$, then $L=[a,\infty)$ for some $a\ge 0$ .
\end{Theorem}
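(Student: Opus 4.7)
My plan is to use Lemma \ref{lem:attractor}: $L$ is the smallest closed $\overline{T}$-invariant subset of $\R$. Since $X_n \ge 0$ a.s., $L \subseteq \R_+$, so I would set $a := \inf L \in L$ and aim to prove $[a, \infty) \subseteq L$. The first step is to combine local contractivity and recurrence to populate $\overline{T}$ with small-$m$ elements: by recurrence there is a compact $K$ and a $y \in \R_+$ with $\Prob(X_n^y \in K\text{ i.o.}) > 0$; on this event, local contractivity yields $\Pi_n |x - y| \to 0$ along the subsequence of returns for $x \ne y$, so $\Pi_n$ takes values arbitrarily close to $0$ with positive probability. Thus $T$ contains a nontrivial contraction $g_0 = (m_0, q_0)$ with $m_0 \in (0, 1)$, and more generally $\overline{T}$ contains elements $(m, q)$ with $m$ arbitrarily small.

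The second step records that any fixed point $q/(1-m)$ of a contraction $(m, q) \in \overline{T}$ with $m < 1$ belongs to $L$: applying $g^n(y) = q/(1-m) + m^n\bigl(y - q/(1-m)\bigr)$ to any $y \in L$ yields a sequence in $L$ converging to $q/(1-m)$, which then lies in the closed set $L$. In particular, the fixed point $x_0 := q_0/(1 - m_0)$ of $g_0$ belongs to $L$, together with the fixed points of all other contractive elements of $\overline{T}$.

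The third and decisive step exploits the unboundedness of $L$. I would pick $y_n \in L$ with $y_n \to \infty$. For every $g = (m, q) \in \overline{T}$, the point $g(y_n) = m y_n + q$ lies in $L$, and I would argue that the set $\{m y_n + q : n \ge 1,\ (m, q) \in \overline{T}\}$ is dense in $[a, \infty)$: given a target $p \in (a, \infty)$ and $\eps > 0$, I would produce $(m, q) \in \overline{T}$ with $m$ small (from Step 1) and $y_n \in L$ large in coordinated fashion so that $|m y_n + q - p| < \eps$. Combined with closedness of $L$, this yields $[a, \infty) \subseteq L$, and the reverse inclusion is trivial.

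The main obstacle will be the final density argument. The set $\overline{T}$ need not be nicely parameterized: if $\mathrm{supp}\,\mu$ is finite, the projections of $\overline{T}$ to each coordinate can be totally disconnected, so one cannot simply pick $(m, q)$ freely in a continuum. My approach would be to mimic the proof of Theorem 2.5.5 in \cite{BurDamMik:16}, which handles the positive recurrent case (where $L = \mathrm{supp}\,\nu$ for the unique stationary distribution). That argument engineers, via iterated compositions in $T$ and passage to the closure, contractions in $\overline{T}$ whose $m$- and $q$-coordinates can be prescribed with arbitrarily small error; the flexibility comes precisely from the unboundedness of $L$ feeding large $y_n$ together with the existence of contractions in $\overline{T}$ with $m$ near $0$. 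The structural ingredients required — closedness and $\overline{T}$-invariance of $L$, unboundedness of $L$, $\Prob(M = 0) = 0$ (so that every element of $T$ is a genuine affine map), and abundance of small contractions in $\overline{T}$ — are all present in our more general setting, so the blueprint should carry over with essentially no change.
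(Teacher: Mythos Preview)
Your approach differs genuinely from the paper's. You propose to redo the proof of the positive recurrent case (Theorem 2.5.5 in \cite{BurDamMik:16}) directly under the weaker hypotheses, checking that its ingredients --- closedness and $\ov T$-invariance of $L$, unboundedness, and contractions in $\ov T$ with arbitrarily small $m$ --- survive; your Step~1 derivation of small contractions from local contractivity plus recurrence is a nice touch in this direction. The paper instead takes a shortcut based on the observation (implicit in Lemma~\ref{lem:attractor}) that $L$ is determined by ${\rm supp}\,\mu$ alone and not by the particular values $\mu(A)$: it tilts $\mu$ to $\wt\mu({\rm d}m,{\rm d}q) = f(m)h(q)\,\mu({\rm d}m,{\rm d}q)$ with strictly positive bounded densities $f,h$ chosen so that ${\rm supp}\,\wt\mu = {\rm supp}\,\mu$ while $\Erw_{\wt\mu}\log M < 0$ and $\Erw_{\wt\mu}\log^{+}Q < \infty$. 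The tilted chain is then positive recurrent with the \emph{same} attractor $L$, and Theorem 2.5.5 applies as a black box. Your route would likely go through --- the density argument behind 2.5.5 should indeed be purely in terms of the semigroup $T$ --- but your Step~3 remains a sketch and you have not actually verified that the cited proof uses nothing beyond what you list. The paper's tilting trick buys economy and robustness: no inspection of the proof of 2.5.5 is needed, only its statement.
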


\begin{proof}
By Lemma \ref{lem:attractor}, the set $L$ is uniquely determined by ${\rm supp}\,\mu$ and does not depend on the values $\mu(A)$ for any particular sets $A$. Consequently, any modification of $\mu$ with the same support leaves $L$ invariant. We will use this observation and define a tilting $\wt\mu$ of $\mu$ of the form
$$ \wt\mu({\rm d}m,{\rm d}q)\ =\ f(m)h(q)\,\mu({\rm d}m,{\rm d}q) $$
for suitable positive functions $f,h$ such that, if $(M,Q)$ has law $\wt\mu$, then the corresponding Markov chain $(\wt X_{n})_{n\ge 0}$ is positive recurrent with unique invariant distribution $\wt\nu$. We thus conclude ${\rm supp}\,\wt\nu = L$ and thereupon the claim $L=[a,\infty)$ if $L$ is unbounded.

\vspace{.1cm}
Put
\begin{align*}
f(m)\ &:=\ \begin{cases}
\displaystyle\frac{c_{0}}{|\log m|},&\text{if }0<m<\displaystyle\frac{1}{e},\\[1.5mm]
\hfill c_{0},&\text{if }\displaystyle\frac{1}{e}\le m<1,\\[1.5mm]
\hfill c_{0}\,c_{1},&\text{if }1\le m<e,\\[1.5mm]
\hfill\displaystyle\frac{c_{0}\,c_{1}}{\log m},&\text{if }m\ge e,
\end{cases}
\shortintertext{and}
h(q)\ &:=\ \begin{cases} \hfill c_{2},&\text{if }0\le q<e,\\[1.5mm]
\displaystyle\frac{c_{2}}{\log q},&\text{if }q\ge e,
\end{cases}
\end{align*}
and fix $c_{0},c_{1},c_{2}>0$ such that
$$ \int f(m) h(q)\ \mu({\rm d}m,{\rm d}q)\ =\ 1. $$
Observe that, if $\Prob_{\wt\mu}$ is such that $(M,Q)$ has law $\wt\mu$ under this probability measure, then
\begin{align}\label{int}
\begin{split}
\Erw_{\wt\mu}\log M\ &=\ c_{0}\left[-\int_{(0,1]\times\R_{+}}(1\wedge|\log m|)\, h(q)\ \mu({\rm d}m,{\rm d}q)\right.\\
&\hspace{2cm}+\ c_{1}\left.\int_{(1,\infty)\times\R_{+}}(1\wedge\log m)\,h(q)\ \mu({\rm d}m,{\rm d}q)\right],
\end{split}
\end{align}
and from this it is readily seen that we can specify $c_{0},c_{1}$ further so as to have
$$ \Erw_{\wt\mu}\log M\ <\ 0. $$
Regarding $\Erw_{\wt\mu}\log^{+}Q$, we find
\begin{align*}
\Erw_{\wt\mu} \log^{+}Q\ &=\ c_{2}\left[\int_{\R_{+}\times [1,e]}\log q\,f(m)\ \mu({\rm d}m,{\rm d}q)\ +\ \int_{\R_+\times (e,\infty)} f(m)\ \mu({\rm d}m,{\rm d}q)\right]\ <\ \infty.
\end{align*}
Hence, if $(M,Q)$ has law $\wt\mu$, then the corresponding Markov chain $(\wt X_{n})_{n\ge 0}$ defined by \eqref{chain} is indeed positive recurrent. This completes the proof of the theorem.\qed
\end{proof}

The next lemma provides some conditions on $\mu$ that are easily checked and sufficient for $L$ to be unbounded.

\begin{Lemma}\label{lem: 3.4}
If $\Prob(M=0)=0$, then each of the following conditions on the law of $(M,Q)$ implies that $L$ is unbounded.
\begin{description}[(C2)]\itemsep2pt
\item[(C1)] The law of $Q$ has unbounded support.
\item[(C2)] $\Prob(M>1)>0$ and $\Prob(Q=0)<1$.
\end{description}
\end{Lemma}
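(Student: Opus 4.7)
The plan is to exploit the $\ov T$-invariance of $L$ provided by Lemma \ref{lem:attractor}: for every $(m,q) \in \supp\mu \subseteq T$ and every $y \in L$ we have $my + q \in L$. Recurrence of $(X_{n})_{n\ge 0}$ guarantees that $L$ is a non-empty subset of $\R_{+}$, so I may fix any $y_{0} \in L$ and propagate further elements of $L$ by acting with suitable elements of $\supp\mu$ or their iterates (which themselves lie in $T$).

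For (C1), the unboundedness of the law of $Q$ implies that the projection of $\supp\mu$ onto the second coordinate is unbounded, so one may select $(m_{n}, q_{n}) \in \supp\mu$ with $q_{n} \to \infty$. Then $m_{n} y_{0} + q_{n} \in L$, and since $m_{n} y_{0} \ge 0$ we have $m_{n} y_{0} + q_{n} \ge q_{n} \to \infty$, establishing unboundedness of $L$.

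For (C2), I would pick $(m_{1}, q_{1}) \in \supp\mu$ with $m_{1} > 1$, whose existence follows from $\Prob(M > 1) > 0$, and $(m_{2}, q_{2}) \in \supp\mu$ with $q_{2} > 0$, whose existence follows from $\Prob(Q = 0) < 1$. Applying $(m_{2}, q_{2})$ to $y_{0}$ yields $z_{0} := m_{2} y_{0} + q_{2} \ge q_{2} > 0$ in $L$. The iterates $(m_{1}, q_{1})^{n} = (m_{1}^{n}, q_{1}(m_{1}^{n}-1)/(m_{1}-1))$ belong to $T$, so acting on $z_{0}$ gives the sequence
\[ m_{1}^{n} z_{0} + q_{1} \frac{m_{1}^{n}-1}{m_{1}-1} \in L, \]
which is bounded below by $m_{1}^{n} z_{0} \to \infty$ since $m_{1} > 1$ and $z_{0} > 0$.

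No substantial obstacle is anticipated. The argument rests on two elementary ingredients: non-emptiness of $L$ (ensured by recurrence) and the standard translation of the marginal support conditions on $M$ and $Q$ into the existence of suitable $(m,q) \in \supp\mu$. The only minor subtlety in (C2) is the degenerate configuration $y_{0} = 0 = q_{1}$, which is precisely bypassed by first composing with $(m_{2}, q_{2})$ to reach a strictly positive point before iterating the expanding map $(m_{1}, q_{1})$.
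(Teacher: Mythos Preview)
Your proof is correct and follows essentially the same approach as the paper: both arguments rely on Lemma~\ref{lem:attractor} and the $\ov T$-invariance of $L$, and in (C2) both iterate an expanding affine map $(m_{1},q_{1})$ with $m_{1}>1$ starting from a strictly positive element of $L$. Your treatment of (C2) is in fact slightly more self-contained than the paper's, which asserts without elaboration that $L$ contains at least two points in order to obtain a positive $y\in L$; you instead manufacture such a point directly by first applying a map $(m_{2},q_{2})$ with $q_{2}>0$, which is exactly the ``minor subtlety'' you identified.
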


\begin{proof}
Assume first (C1), put $\beta:=\sup\{x: x\in L\}$ and recall from Lemma \ref{lem:attractor} that $L$ is invariant under the action of ${\rm supp}\,\mu$, i.e., if $(m,q)\in {\rm supp }\,\mu$ and $x\in L$, then $mx+q\in L$. In particular,
\begin{equation}\label{eq:5}
m\beta+q\,\le\,\beta\quad\text{ for any } (m,q)\in {\rm supp }\, \mu.
\end{equation}
Hence, if $\beta>0$, we have $\beta\ge m\beta + q \ge q$ and conclude $\beta=\infty$, for $q$ can be chosen arbitrarily large.

\vspace{.1cm}
Assuming now (C2), pick $g=(m,q)\in {\rm supp}\,\mu$ such that $m>1$. Notice that $x=x_{0}(g)=q/(1-m)$, the unique fixed point of $g$, is negative or zero because $m>1$. Since, under our hypothesis, the attractor set consists of at least two points, one can choose some positive $y\in L$.  Using \eqref{eq:x0}, we then infer
$$ g^{n}(y)\ =\ m^{n}(y-x)\,+\,x\ \to\ \infty $$
as $n\to\infty$ which completes the proof.\qed
\end{proof}

The assumptions of Lemma \ref{lem: 3.4} are not optimal. Even if $\Prob(M<1)=1$ and the support of the distribution of $Q$ is bounded, the attractor set may be unbounded, as demonstrated by the next lemma.

\begin{Lemma}
Assume that $\Prob(M<1)=1$. Then the attractor set $L$ is bounded if, and only if, the set
$$ S_{1}\ =\ \big\{x_{0}=x_{0}(g):g\in {\rm supp\,}\mu\big\} $$
is bounded or, equivalently, $Q/(1-M)$ is a.s. bounded.
\end{Lemma}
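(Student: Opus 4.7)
The proof naturally splits into two implications, connected by the identity $L=\overline{S_0}$ from Lemma~\ref{lem:attractor} together with the obvious inclusion $S_1\subset S_0$. The plan is to show $S_0\subset\mathrm{conv}(S_1)$, which turns boundedness of $L$ into a property of the generating data.

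The ``only if'' direction is immediate: if $L$ is bounded, then so are $S_0\subset L$ and $S_1\subset S_0$. The stated equivalence of ``$S_1$ bounded'' with ``$Q/(1-M)$ a.s.\ bounded'' is routine, since under $\Prob(M<1)=1$ the continuous map $(m,q)\mapsto q/(1-m)$ is defined $\mu$-a.e.\ and pushes $\mu$ forward to the law of $Q/(1-M)$, whose topological support is $\overline{S_1}$.

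The heart of the argument is the ``if'' direction. First I would record the following convex-combination claim: for every $g=g_1\cdots g_n\in T$ with $g_i=(m_i,q_i)\in\supp\mu$, the fixed point $x_0(g)$ is a convex combination of $x_0(g_1),\ldots,x_0(g_n)$. Granting this, $S_0\subset\mathrm{conv}(S_1)$ and hence bounded $S_1$ forces bounded $S_0$, so $L=\overline{S_0}$ is bounded.

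To prove the convexity claim I would use the affine composition formula $g(x)=M_n x+Q_n$ with $M_n:=\prod_{i=1}^{n}m_i$ and $Q_n:=\sum_{i=1}^{n}M_{i-1}q_i$ (with $M_0:=1$), so that $x_0(g)=Q_n/(1-M_n)$ because $m_i<1$ a.s.\ guarantees $M_n<1$. Substituting $q_i=(1-m_i)\,x_0(g_i)$ yields
$$
x_0(g)\ =\ \frac{\sum_{i=1}^{n} M_{i-1}(1-m_i)\,x_0(g_i)}{1-M_n}.
$$
The coefficients $\alpha_i:=M_{i-1}(1-m_i)/(1-M_n)$ are clearly nonnegative, and the telescoping identity
$$
\sum_{i=1}^{n}M_{i-1}(1-m_i)\ =\ \sum_{i=1}^{n}(M_{i-1}-M_i)\ =\ 1-M_n
$$
shows that $\sum_{i=1}^{n}\alpha_i=1$. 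This exhibits $x_0(g)$ as a convex combination of the $x_0(g_i)$, as required.

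No single step constitutes a real obstacle: the whole argument is pinned on spotting the telescoping structure of the composed affine map, which converts products in $T$ into convex combinations of the generators' fixed points, thereby reducing the geometric property (boundedness of $L$) to the distributional one (a.s.\ boundedness of $Q/(1-M)$).
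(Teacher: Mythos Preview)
Your proof is correct. For the ``only if'' direction it coincides with the paper's argument (via $S_1\subset S_0$ and $L=\overline{S_0}$). For the ``if'' direction the paper takes a slightly different route: rather than computing the fixed point of a composition, it invokes the second conclusion of Lemma~\ref{lem:attractor}---that $L$ is the \emph{smallest} $\overline{T}$-invariant set---and simply observes that the interval $[\inf S_1,\sup S_1]$ is $\overline{T}$-invariant, since each generator $g=(m,q)\in\supp\,\mu$ sends $x$ to the convex combination $mx+(1-m)x_0(g)$ of $x$ and $x_0(g)\in S_1$. Your telescoping identity makes the same convexity phenomenon explicit at the level of $T$, exhibiting every point of $S_0$ directly as a convex combination of points of $S_1$; this route uses only the description $L=\overline{S_0}$ and not the minimality statement, at the cost of a short computation. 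Both arguments yield the same containment $L\subset[\inf S_1,\sup S_1]$.
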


\begin{proof}
Assuming that $S_{1}$ is bounded, denote by $a$ and $b$ its infimum and supremum, respectively. Since the closed interval $[a,b]$ is obviously $\ov T$-invariant, it must contain $L$ by Lemma \ref{lem:attractor} which implies that $L$ is bounded.

If $S_{1}$ is unbounded, then $S_{1}\subset S_{0}$ implies that $S_{0}$ and thus also $L=\ov S_{0}$ is unbounded by another appeal to Lemma \ref{lem:attractor}. \qed
\end{proof}

Finally, we turn to the case when the attractor set $L$ is bounded. As already mentioned, the local structure of $L$ cannot generally be described precisely. If $\mu$ is supported by $(a,0)$ and $(a,1-a)$ for some $0<a<1/2$, then $L\subset [0,1]$ equals the Cantor set obtained by initially removing $(a,1-a)$ from $[0,1]$ and successive self-similar repetitions of this action for the remaining intervals (see also \cite[Remark 7]{BurtonRoesler:95}). So the Cantor ternary set is obtained if $a=1/3$. On the other hand, we have the following result.

\begin{Lemma}
For $\alpha,\beta<1$ with $\alpha+\beta\ge 1$ suppose that $(\alpha,q_{\alpha}), (\beta,q_{\beta})\in {\rm supp }\,\mu$ and further  $x_{\alpha}:=q_{\alpha}/(1-\alpha)\le q_{\beta}/(1-\beta) =:x_{\beta}$. Then the interval $[x_{\alpha},x_{\beta}]$ is contained in $L$.
\end{Lemma}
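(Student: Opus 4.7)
The plan is to exploit the fact, guaranteed by Lemma~\ref{lem:attractor}, that $L$ is closed and contains the fixed point $x_0(g)=q/(1-m)$ of every element $g=(m,q)\in T$ with $m<1$. In particular, $x_\alpha,x_\beta\in L$. Setting $g_\alpha:=(\alpha,q_\alpha)$ and $g_\beta:=(\beta,q_\beta)$, both lie in $T\cap\{m<1\}$, so any composition $h=g_{i_1}\circ\cdots\circ g_{i_n}$ with $i_k\in\{\alpha,\beta\}$ belongs to $T$, has multiplier $\le\max(\alpha,\beta)^n<1$, and its fixed point is therefore in $L$. The strategy is to approximate every point of $[x_\alpha,x_\beta]$ by fixed points of such compositions.

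The first (and essentially the only) computation is a geometric one. Writing $g_\alpha(x)=\alpha(x-x_\alpha)+x_\alpha$ and $g_\beta(x)=\beta(x-x_\beta)+x_\beta$, one finds
\begin{align*}
g_\alpha\bigl([x_\alpha,x_\beta]\bigr)\ &=\ \bigl[x_\alpha,\,x_\alpha+\alpha(x_\beta-x_\alpha)\bigr],\\
g_\beta\bigl([x_\alpha,x_\beta]\bigr)\ &=\ \bigl[x_\beta-\beta(x_\beta-x_\alpha),\,x_\beta\bigr].
\end{align*}
Both are subsets of $[x_\alpha,x_\beta]$, and the hypothesis $\alpha+\beta\ge 1$ is precisely what guarantees that their union covers the entire interval:
$$ g_\alpha\bigl([x_\alpha,x_\beta]\bigr)\,\cup\,g_\beta\bigl([x_\alpha,x_\beta]\bigr)\ =\ [x_\alpha,x_\beta]. $$

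Given any $y\in[x_\alpha,x_\beta]$, I would then construct inductively a sequence $(i_n)_{n\ge 1}\in\{\alpha,\beta\}^\N$ together with points $y_n\in[x_\alpha,x_\beta]$ such that $y=g_{i_1}\circ\cdots\circ g_{i_n}(y_n)$; at each step the covering property above lets me pick $i_n$ so that $y_{n-1}\in g_{i_n}([x_\alpha,x_\beta])$ and then set $y_n:=g_{i_n}^{-1}(y_{n-1})$. Writing $h_n:=g_{i_1}\circ\cdots\circ g_{i_n}\in T$, with multiplier $\prod_{k=1}^n m_{i_k}\le\rho^n$ where $\rho:=\max(\alpha,\beta)<1$, the contraction $h_n$ sends the invariant interval $[x_\alpha,x_\beta]$ into itself and thus has its unique fixed point $x^*_n\in[x_\alpha,x_\beta]\cap S_0\subset L$.

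The closing estimate is then immediate: since both $y$ and $x^*_n$ are images under $h_n$ of points of $[x_\alpha,x_\beta]$,
$$ |y-x^*_n|\ =\ |h_n(y_n)-h_n(x^*_n)|\ \le\ \rho^n\,(x_\beta-x_\alpha)\ \xrightarrow[n\to\infty]{}\ 0. $$
Hence $x^*_n\to y$, and the closedness of $L$ (Lemma~\ref{lem:attractor}) yields $y\in L$, proving $[x_\alpha,x_\beta]\subset L$. The only step that takes any thought is the covering identity in the first paragraph; everything else is a routine iterated-function-system contraction argument.
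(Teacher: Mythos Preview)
Your proof is correct and follows essentially the same iterated-function-system argument as the paper. Both hinge on the covering identity $g_\alpha([x_\alpha,x_\beta])\cup g_\beta([x_\alpha,x_\beta])=[x_\alpha,x_\beta]$ (equivalent to $\alpha+\beta\ge 1$) and an inductive choice of compositions whose images shrink to the target point. The only cosmetic difference is which half of Lemma~\ref{lem:attractor} is invoked: the paper tracks the orbit points $g_n(0)$ and uses that $0=x_\alpha\in L$ together with $\overline{T}$-invariance of $L$, whereas you track the fixed points $x_n^*$ of the compositions $h_n$ and use $L=\overline{S_0}$ directly.
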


\begin{proof}
W.l.o.g. we assume that $x_{\alpha}=0$ and $x_{\beta}=1$ so that the points in ${\rm supp}\,\mu$ are $f_{\alpha}:=(\alpha,0)$ and $f_{\beta}:=(\beta,1-\beta)$ rather than $(\alpha,q_{\alpha}), (\beta,q_{\beta})$ and $[0,1]\subset L$ must be verified.

\vspace{.1cm}
Pick any $x\in (0,1)$. Let $U$ be the subsemigroup of ${\sf Aff}(\R)$ generated by $f_{\alpha}$ and $f_{\beta}$. To prove that $x\in L$, it is sufficient by Lemma \ref{lem:attractor} to find a sequence $(g_{n})_{n\ge 1}$ in $U$ such that $x$ is an accumulation point of $(g_{n}(0))_{n\ge 1}$.

\vspace{.1cm}
We construct this sequence inductively. Observe first that $\alpha+\beta \ge 1$ implies
$$ x\,\in\,(0,1)\,\subset\, [0,\alpha] \cup [1-\beta,1]. $$
If $x$ is an element of $[0,\alpha]$, take $g_{1} = f_{\alpha}$, otherwise take $g_{1}=f_{\beta}$. In both cases,
$$ x\,\in\, [g_{1}(0), g_{1}(1)]\quad\text{and} \quad |g_{1}(1) -g_{1}(0)|\,\le\,\alpha\vee\beta. $$
Assume we have found $g_{n}=(a_{n},b_{n})$ such that
$$ x\,\in\,[g_{n}(0), g_{n}(1)]\quad\text{and}\quad |g_{n}(1) -g_{n}(0)|\,=\, a_{n}\,\le\,\big(\alpha \vee \beta\big)^n. $$
Using again $\alpha+\beta\ge 1$, we have
\begin{align*}
x\ &\in\ [g_{n}(0), g_{n}(1)]\ =\ [b_{n}, a_{n} + b_{n}]\\
&\subset\ [b_{n}, \alpha a_{n} + b_{n}] \cup [(1-\beta)a_{n} + b_{n}, a_{n} + b_{n}]\\
&=\ [g_{n}f_{\alpha}(0), g_{n}f_{\alpha}(1)] \cup [g_{n}f_{\beta}(0), g_{n}f_{\beta}(1)].
\end{align*}
Thus $x$ must belong to one of these intervals. If $x\in [g_{n}f_{\alpha}(0), g_{n}f_{\alpha}(1)]$, put $g_{n+1}=g_{n} f_{\alpha}$, otherwise put $g_{n+1}=g_{n} f_{\beta}$. In both cases,
$$ x\,\in\,[g_{n+1}(0), g_{n+1}(1)]\quad\text{and} \quad |g_{n+1}(1)-g_{n+1}(0)|\,=\,a_{n} (\alpha \vee \beta)\,\le\,\big(\alpha \vee \beta\big)^{n+1}. $$
Hence, $x$ is indeed an accumulation point of the sequence $(g_{n}(0))_{n\ge 1}$ and therefore an element of $L$.\qed
\end{proof}

\vspace{1cm}
\footnotesize
\noindent   {\bf Acknowledgements.}
The authors wish to thank two anonymous referees for various helpful remarks that helped to improve the presentation and for bringing reference \cite{DenKorWach:16} to our attention.
G. Alsmeyer was partially supported by the Deutsche Forschungsgemeinschaft (SFB 878) "Geometry, Groups and Actions". D. Buraczewski was partially supported by the National Science Centre, Poland (Sonata Bis, grant number DEC-2014/14/E/ST1/00588).
Part of this work was done while A.~Iksanov was visiting M\"unster in January, February and July 2015, 2016. He gratefully acknowledges hospitality and financial support.

\def\cprime{$'$}

\end{document}